\newcommand{\bearno}{\begin{eqnarray*}}
\newcommand{\enarno}{\end{eqnarray*}}
\newcommand{\beeq}{\begin{equation}}
\newcommand{\eneq}{\end{equation}}
\newcommand{\R}{\mathbb{R}}
\newcommand{\expect}[1]{{\mathbb E}\left[#1\right]}
\newcommand{\pr}[1]{{\rm {\mathbb P}}(#1)}
 \theoremstyle{mdpi}
 \newcounter{thm}
 \newcounter{ex}
 \newcounter{re}
 \newtheorem{Theorem}[thm]{Theorem}
 \newtheorem{Lemma}[thm]{Lemma}
 \newtheorem{Corollary}[thm]{Corollary}
 \theoremstyle{mdpidefinition}
 \newtheorem{Example}[ex]{Example}
 \newtheorem{Remark}[re]{Remark}
 \newtheorem{Definition}[thm]{Definition}
\address{%
\texorpdfstring{$^{1}$}{1} \quad Institute for Financial and Actuarial Mathematics, Department of Mathematical Sciences, University of
Liverpool,  Liverpool L69 7ZL, UK\\
\texorpdfstring{$^{2}$}{2} \quad Mathematical Institute, University of Wroc{\l}aw, Poland}
\abstract{
We analyse the ruin probabilities for a renewal insurance risk process with inter-arrival time distributions depending on the claims that arrived within a fixed (past) time window. This dependence could be explained through a regenerative structure. The main inspiration of the model comes from the Bonus-Malus feature. We discuss first asymptotic results of ruin probabilities for different regimes of claim distributions. For numerical results, we recognise an embedded Markov additive process. Via an appropriate change of measure, ruin probabilities could be computed to a closed form formulae. Additionally, we present simulated results via the importance sampling method, which further permit an in-depth analysis of a few concrete cases.
}
\begin{document}
\nolinenumbers


\section{Introduction}\label{sec:intr}
With the ever growing popularity of Bonus-Malus systems, one interesting question to study would be whether it really reduces the associated risk and with how much. A common measure to assess risks an insurer is exposed to is via the so-called ruin probabilities. Motivated by such kind of questions, we try to compute the probability of ruin for a simple Bonus system (also called no claim discount (NCD) system) in this paper. The main feature of such systems is that there is a premium discount when no claims are observed in the previous year. Inspired by this feature, we found a regenerative structure within inter claim times that could describe the dependence in a Bonus system equivalently.\\

For the simplest case, there are only two classes - either a base or a discounted level in the NCD system under the consideration here. The discounted level implies a lower premium rate and occurs when no claim is witnessed in a past fixed time window $\xi$. That is to say, the portfolio moves between these two classes. The switching condition relies on the history of arrived claims within the fixed time window $\xi$. Theoretically speaking, it also works for a merely Malus system. Yet in practice, such systems do not exist as it probably sounds more tempting if an insurance company offers rewards rather than penalties. Therefore, the incorporation of such dependence in a risk model violates some of the classical assumptions, thus making it more difficult to calculate ruin probabilities. However, by equating the dependence between claim arrivals and premium rates with that between two consecutive inter arrival times (Figure \ref{fig:BMS}), a regenerative structure can be identified so that further analysis could be carried out.\\

Looking into literature, one extension from a classical risk model is to relax the assumption of independence. Hence, dependence modelling has been introduced under a risk theory framework. There are several kinds of dependence to be considered. For a dependence within claims, Albrecher et al. \cite{albrecher2011explicit} calculated ruin probabilities by using Archimedean survival copulas. Through a copulas method, Valdez and Mo \cite{valdez2002ruin} also worked with risk models with dependence among claim occurrences focusing more on the simulation side. The dependence between claim sizes and inter-arrival times was also analysed. Albrecher and Boxma \cite{albrecher2004ruin} first considered the case when the inter-claim time depends on the previous claim size with a random threshold. Due to the complexity of inverting Laplace Transform, they could only obtain the results in terms of Laplace Transforms. Kwan and Yang \cite{kwan2010dependent} then studied such a dependence structure with a deterministic threshold and they were able to express ruin probability explicitly by solving a system of ordinary delay differential equations.\\

On the other hand, most of the work on Bonus-Malus systems mainly relied on a construction of a discrete Markov Chain in order to compute different levels of prices. See \cite{Lemaire}. Due to the possibility of slow convergence to stationarity, \cite{asmussen2014} added an age-correction and implemented numerical analyses for various Bonus-Malus systems in different countries. Instead of considering premium levels depending only on claims in the previous period, it is alternatively suggested to take into account of the entire history where a Bayesian view could be adopted as in \cite{asmussen2014}.  A recent work by Ni et al. \cite{ni2014} also applied the Bayesian approach to reflect this idea and obtained premiums in a closed form when claim severities are assumed to be Weibull distributed.\\

There have been a few papers investigating ruin probabilities for a Bonus-Malus system recently. Working with real data, \cite{Alfredo1, Alfredo2} calculated ruin probabilities under a realistic Bonus-Malus framework. The idea in their work is that they first analyse ruin probabilities for a single year conditioning on the reserve levels at the beginning and the end of the year. Since premium rate is kept constant within a year, a classical technique could be borrowed. Then they use approximations and estimate ruin probabilities numerically. On the other hand, the incorporation of the feature of a Bonus-Malus system into a risk model is similar to a variation in the premium rates after a claim arrival. For such a dependence, by employing a Bayesian estimator in a risk model and using a comparison method with the classical case, Dubey \cite{Dubey} and Li et al. \cite{Li2015} could interpret ruin probabilities in terms of a classic one. In this paper, we follow a similar idea. However, our work here is to model the dependence between premium rates and the claim arrivals within a fixed time window. In our opinion it mimics better the key feature of a Bonus system. The consequence of this approach is that it implies modelling the dependence between two consecutive inter-claim times based on a fixed threshold, which is explained by Figure \ref{fig:BMS}. This serves as the main goal of this work. Furthermore, the allowance of such dependence obviously violates the renewal property as in the classical model. Nevertheless, a regenerative structure can be identified so that anlayses are possible. For literature on regenerative processes, we refer to \cite{BP1, BP2}.\\

\begin{center}
\begin{figure}[!htb]
\scalebox{0.3}[0.3]{\includegraphics{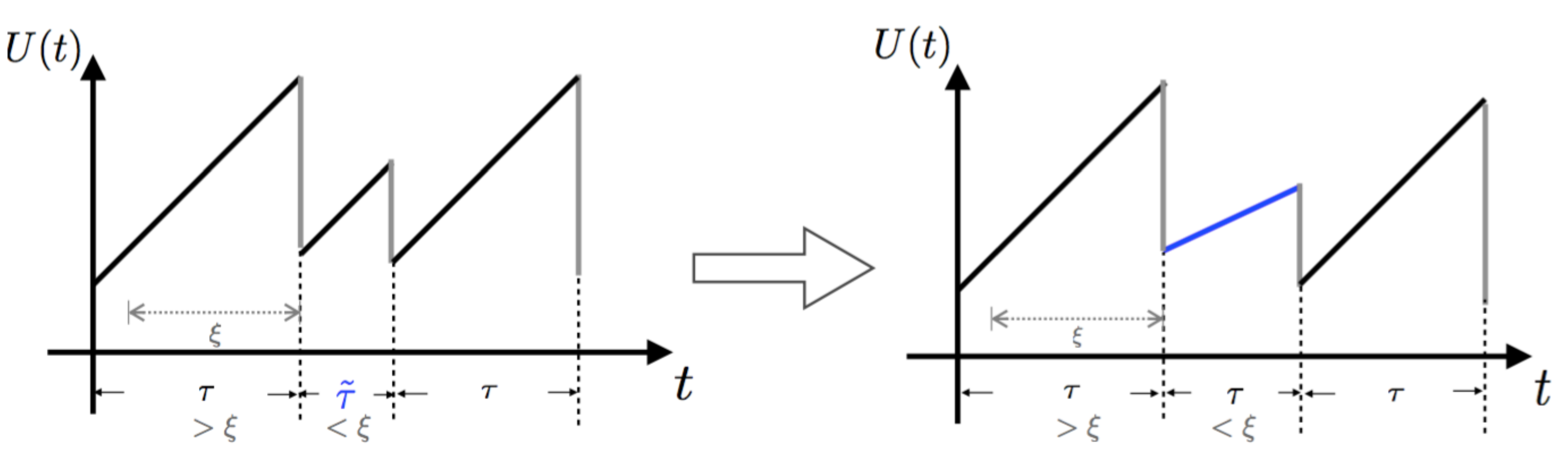}}
\caption{Model transform}
\label{fig:BMS}
\end{figure}
\end{center}
In Figure \ref{fig:BMS}, Let us denote the inter claim time by $\tau$. The graph on the right shows a two-level Bonus system where the premium rate decreases after a relatively long wait which exceeds a fixed number $\xi$. In reality, this fixed window $\xi$ could be understood as a calendar year for instance, because many insurances companies charge different premiums based only on yearly claim histories. After that, since the second waiting interval is less than $\xi$, the premium rate returns to its original value and so on and so forth. Equivalently, this could be transferred to a model where the adjustment on premium rates is reflected in inter arrival times switching between two different random variables, as long as the increment of the surplus process $U(t)$ in this time interval is kept the same. That is to say, whenever a large inter arrival time which is above $\xi$ is witnessed, the next inter claim time will switch to a random variable $\tilde{\tau}$ with a different distribution. As we work with the ruin probabilities under an infinite-time horizon, such transformation would not affect the results. $\tilde{\tau}$ could be assumed to have a smaller mean than $\tau$ for a more 'realistic' interpretation, resulting from the effect of the drop in the premium rate. Additionally, for computational reasons, we make the assumption that the inter-exchange of the randomness of inter arrival times only happens after a jump rather than precisely at the end of the fixed window.\\

Aiming at studying the ruin probability of a Bonus system, we try to investigate this model under a regenerative framework using various methods. We start by looking at some asymptotic results via adopting theories developed for general regenerative processes in \cite{BP1, BP2}. For the Cram\'{e}r case, it still shows exponential tails for the probability of ruin. All asymptotic results are shown for a general situation where the distribution for each random variable is not specified. Then, by constructing an appropriate Markov Additive Process and using the Importance Sampling method we can run simulations to get numerical results for the case where inter claim times and claims are exponentially distributed. In the end, we employ the crude Monte Carlo simulations to compare the underlying ruin probability with a classic one as a case analysis. In addition, we look at the influence of claim distributions on the ruin probabilities, where we found that there is no significant differences in ruin probabilities when altering between Exponential and Pareto claims. In general, results suggest that the use of Bonus systems may not act in favour of the reduction on ruin probabilities. That is probably because premium discounts generally decrease the risk reserves. However, if the insurer is able to gain more market share by providing a Bonus system, although ruin probabilities could not be improved, their revenues and profits could possibly experience a positive effect.\\

\section{The model}
Let us start from describing the model that we will work in this paper with.
We denote by $U(t)$ the amount of surplus  of an insurance portfolio at
time $t$:
\begin{equation}
\label{eq1}
U(t) = x +c\,t -\sum_{k=1}^{N(t)} Y_k.
\end{equation}
In the above classical model \eqref{eq1}, $c$ represents the constant rate of premiums inflow, $N(t)$ is a arrival process
that counts the number of claims incurred during the time interval
$(0,t]$ and $(Y_k)_{k\geq 0}$ is a sequence of independent and identically distributed (i.i.d.) claim sizes with distribution function $F_Y$ and density $f_Y$
(also independent of the claim arrival process $N(t)$). We assume that $U(t)\to+\infty$ a.s. as $t\to+\infty$. One of the crucial quantities to investigate in this context is the probability that the surplus in the portfolio will not be sufficient to cover the claims for the first time, which is called the probability of ruin
\[
\psi(x) = \pr{T(x)<\infty \mid U(0)=x}.
\]
Here $U(0)=x\geq 0$ is the initial reserve in the portfolio and $$T(x)=\inf \:\{t\geq 0:\,U(t)<0 \mid U(0)=x\}$$
is the time of ruin for an initial surplus $x$. We specify the counting process $N(t)$ which in the classical models is a Poisson process.
Let $(\tau_k)_{k\geq 0}$ be the sequence of inter-claim times. In this paper we analyse the model when the distribution $F_{\tau_k}$ of $\tau_k$
depends on the number of claims that appeared within a fixed past time window $\xi$ as follows,
$$\mathbb{P}(\tau_k\leq x)=F_{\tau_k}\left(x, N\left(\sum_{i=1}^{k}\tau_{i-1}\right)-N\left(\sum_{i=1}^{k}\tau_{i-1}-\xi\right)\right).$$
It is true that when such dependence structure is introduced, a direct use of renewal theory is no longer applicable here. However, taking a second look, even though it is not renewal at each jump epoch, the process in fact renews after several jumps and we call this a 'regeneration'. Thus, we define the regenerative epochs for our model in the following way.
\begin{Definition}
Regeneration epochs $T_{k+1}, k=0, 1, \ldots$ are defined as
$$T_{k+1}=\min\left\{\tau_l\geq T_k: N\left(\sum_{i=1}^{l}\tau_i\right)-N\left(\sum_{i=1}^{l}\tau_i-\xi\right)=0\right\}$$ with $T_0=0$.\label{regtime}
\end{Definition}
Roughly speaking, at these epoch $T_k$ (being the arrival times with zero number of arrivals within the last time window lagged by $\xi$)
the risk process $U(t)$ loses his 'memory' and starting at these epochs the stochastic evolution remains the same.
A formal definition of a regenerative process can be found in Appendix A1 in \cite{RP}. It easy to observe that
the risk process $U(t)$ is indeed regenerative with regeneration epochs $T_k$. In this case, $\mathbb{P}(\tau_k\leq x)=F_{\tau_k}(x,0)$. Notice that we define the regenerative epochs in such a way that the concern only lies in whether there are claims or not in the past fixed window $\xi$ rather than how many of them.\\

Moving into details, let us consider the claim surplus process denoted by
$$S(t)=\sum_{k=1}^{N(t)}Y_i-c.t$$
Moreover, let
\[
M   = \sup_{t\geq 0} S(t), \hspace{1cm} M_{n+1} = \sup_{T_n\leq t<
T_{n+1}} S(t)-S(T_n), \quad\text{for $n\geq 0$},
\]
and
\begin{equation}\label{eqn:XM}X_{n+1}=S(T_{n+1})-S(T_n).\end{equation}
Then due to the regenerative structure of the claim surplus process $S(t)$, the discrete-time process
$S_n=S(T_n)=\sum_{i=1}^nX_i$ ($n\geq 0$) is a random walk.
The crucial observation used in this paper is:
 \begin{equation}\label{numer}
\psi(x)=\pr{M>x}= \pr{\max_n (M_n+S_{n-1})>x}.\end{equation}

The simplest case that we focus on is the one with inter claim times following two random variables $\tau$ and $\tilde{\tau}$. The first one $\tau$ we choose when in a past time-window of length $\xi$ there is at least one claim. Otherwise we choose $\tilde \tau$ as the inter arrival time. Hence
\begin{equation*}
\mathbb{P}(\tau_k\leq y)=\left\{\begin{array}
{l@{\quad\quad}l}
\mathbb{P}(\tau\leq y),\;  \mbox{if} \;\;N(\sum_{i=1}^{k}\tau_{i-1})-N(\sum_{i=1}^{k}\tau_{i-1}-\xi)\geq 1,\\
\mathbb{P}(\tilde{\tau} \leq y),\; \mbox{otherwise}. \end{array} \right.\nonumber
\end{equation*}
It is a natural choice since usually in insurance company a long "silence" translates into a different behaviour of the arrival process just right after it. To rephrase it, our current model incorporates a dependence structure between each pair of consecutive inter-arrival times. Whenever an inter-arrival time exceeds $\xi$, the next one would have the same distribution as $\tilde{\tau}$. Otherwise, it conforms to $\tau$.\\

More interestingly, as mentioned earlier, such model set-up would fit into a basic Bonus system, i.e., a system where policyholders enjoy discounts when they do not file claims for a certain period (but with no penalties). Without loss of generality, Figure \ref{fig:BMS} plots an example of such risk processes and demonstrate how our model reflects the feature of a Bonus system.\\


An example of sample path of the claim surplus process we will be working with is given in Figure \ref{fig:claimsurplus} (where we assume starting from $\tilde{\tau}$).
\begin{figure}[!htb]
\centering
\scalebox{0.25}[0.25]{\includegraphics{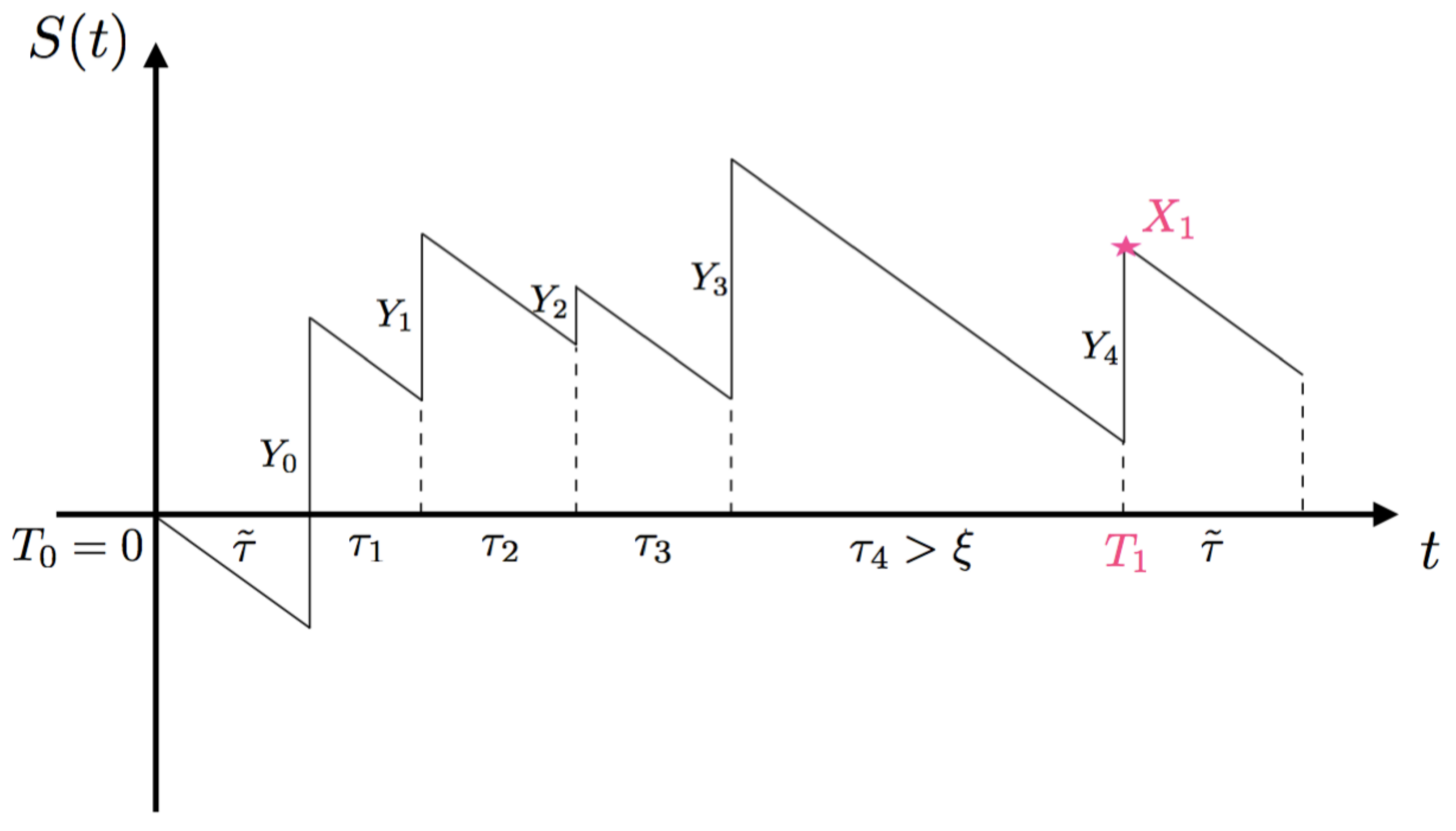}}
\caption{A sample path of the regenerative process}
\label{fig:claimsurplus}
\end{figure}
Recall from \eqref{eqn:XM} that $X_1$ is the end value at the first regenerative epoch. Then it is not difficult to observe that it has the same law as
\begin{equation}\label{reprx1}
X_1\overset{d}{=}(Y_0-\tilde{\tau})+\mathbf{I}_{\{\tilde{\tau}\leq \xi\}}\left(\sum_{k=1}^{N-1}(Y_k-\tau_k^{\leq\xi})+\left(Y_{N}-\tau^{>\xi}_{N}\right)\right),
\end{equation}
where $N$ is a geometrical random variable with parameter $p=\mathbb{P}(\tau>\xi)$. Here $\mathbb{P}(N=k)=(1-p)^{k-1}p$, $k=1,2\ldots$ and $\expect{\tau_k^{\leq\xi}}=\mathbb{E}[\tau_k|\tau_k\leq \xi]$, $\expect{\tau_k^{>\xi}}=\mathbb{E}[\tau_k|\tau_k> \xi]$.\\

The paper is organized as follows. Section 3 presents asymptotic results about ruin probabilities under three different regimes for claim distributions, using asymptotics derived for general regenerative processes as in \cite{BP1, BP2}.  Section 4 demonstrates some numerical results via simulations and discusses a case analysis including comparison with ruin in a classical risk model. The simulations used in this section are based on the embedded Markov additive process within our model and rely on the importance sampling method via a change of measure. Our work will be concluded in Section 5.

\section{Asymptotic results}\label{main}
In this section, we look at three different situations for claim distributions and analyse the asymptotic ruin probability associated with each of them. Inter arrival times considered in this section are general random variables if not mentioned specifically.

\subsection{The heavy-tailed case}\label{heavy}
Let us first discuss the heavy-tailed case. We start with the assumption that the distribution $F_M$ of generic $M_1$
belongs to the class~$\mathcal{S}$
of \emph{subexponential} distribution functions, where a distribution
function~$G\in\mathcal{S}$ on $\R_+$ if and only if
$\overline{G}(x)>0$, for all $x$, and
\begin{equation}\label{eq:5}
  \lim_{x\to\infty}\overline{G^{*2}}(x)/\overline{G}(x) = 2
\end{equation}
(where $G^{*2}$ is the convolution of $G$ with itself).
Here $\overline{G}$ denotes the tail distribution given by
$\overline{G}(x)=1-G(x)$.
More
generally, a distribution function~$G$ on $\R$ is subexponential if
and only if $G^+$ is subexponential, where $G^+=G\mathbf{I}_{\R_+}$ and
$\mathbf{I}_{A}$ is the indicator function of a set $A$.
We further assume throughout that $F_M \in \mathcal{S}^*$ are strong subexponential distributions.
According to Definition 3.22 in \cite{FKZ}, a distribution function~$G$ on $\R$ belongs to the class~$\mathcal{S}^*$, i.e., G is strong subexponential, if and
only if $\overline{G}(x)>0$, for all $x$, and
\begin{equation}
  \label{eq:6}
  \int_0^x\overline{G}(x-y)\overline{G}(y)\,dy
  \sim 2m_{G}\overline{G}(x),\quad\text{as $x \to \infty$},
\end{equation}
where
\begin{displaymath}
  m_{G} = \int_0^\infty \overline{G}(x) \,dx
\end{displaymath}
is the mean of $G$.  It is again known that the property $G\in\mathcal{S}^*$
depends only on the tail of $G$.  Further, if $G\in\mathcal{S}^*$ then $G\in\mathcal{S}$
and also $\overline{G^s}\in\mathcal{S}^*$ where
\begin{displaymath}
  \overline{G^s}(x) = \min \biggl( 1,
    \int_x^{\infty} \overline{G}(t)\, dt
  \biggr).
\end{displaymath}
is the integrated, or \emph{second-tail}, distribution function
determined by $G$. See \cite{FKZ} for details.

\begin{Theorem}
\label{infinitezerodelay}
If $\expect{M_1}<\infty$ and $F_M\in \mathcal{S}^*$ then
\beeq
\label{infinitetime}
\psi(x)\sim \frac{1}{\mu} \int_x^\infty\overline{F}_M(u) {\rm d}u,
\eneq
as $x\rightarrow\infty$, with $\mu=-\expect{X_1}$.
\end{Theorem}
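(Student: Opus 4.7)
My plan is to reduce the statement to the subexponential tail asymptotics for the supremum of a regenerative process and then invoke the corresponding result from \cite{BP1, BP2}. By the representation \eqref{numer},
\[
\psi(x) = \mathbb{P}(M > x) = \mathbb{P}\bigl(\max_{n \geq 1}(M_n + S_{n-1}) > x\bigr),
\]
so $\psi(x)$ is the tail of a perturbed random walk maximum, in which $S_n = \sum_{i=1}^n X_i$ has i.i.d.\ increments across cycles and $(M_n)_{n\geq 1}$ are i.i.d.\ cycle maxima with common distribution $F_M$. The assumption $U(t) \to +\infty$ a.s., combined with the SLLN applied to $S_n$ along the regeneration epochs $T_n$, forces the negative drift $\mathbb{E}[X_1] = -\mu < 0$.

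The next step is to verify the hypotheses of the regenerative asymptotic in \cite{BP1, BP2}: the i.i.d.\ cycle structure of the pairs $(M_n, X_n)$ (given by Definition \ref{regtime}), finite mean cycle length, the negative drift $\mathbb{E}[X_1] < 0$, $\mathbb{E}[M_1] < \infty$, and $F_M \in \mathcal{S}^*$. With all four hypotheses in place, that result yields directly
\[
\psi(x) \sim \frac{1}{\mu} \int_x^\infty \overline{F}_M(u)\,du, \quad x \to \infty.
\]

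The underlying heuristic is the principle of a single big jump: an excursion of $M_n + S_{n-1}$ above a large level $x$ is caused, with overwhelming probability, by one atypically large cycle maximum $M_n$ occurring at a cycle index $n$ where $S_{n-1} \approx -\mu(n-1)$ sits near its law-of-large-numbers value. Summing the approximations $\mathbb{P}(M_n + S_{n-1} > x) \sim \overline{F}_M(x + \mu(n-1))$ over $n \geq 1$ and comparing the resulting series with a Riemann integral produces precisely the factor $\mu^{-1}\int_x^\infty \overline{F}_M(u)\,du$. The main technical obstacle is handling the within-cycle dependence between $M_n$ and $X_n$ and making the single-big-jump heuristic uniform in $n$; this is resolved in \cite{BP1, BP2} by Kesten's inequality (valid precisely under $\mathcal{S}^*$) together with a truncation argument separating typical from atypical cycles, which controls the contribution of cycles where $S_{n-1}$ deviates substantially from its mean. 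Modulo verification of these integrability conditions in our specific setting, the theorem therefore follows directly from the cited regenerative asymptotic.
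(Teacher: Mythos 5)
Your proof is correct and follows the same route as the paper, which states Theorem~\ref{infinitezerodelay} without an explicit proof, relying directly on the regenerative-process tail asymptotics from \cite{BP1, BP2} exactly as you propose. The additional hypothesis-checking (negative drift from $U(t)\to\infty$ a.s., i.i.d.\ cycle structure from Definition~\ref{regtime}, and the integrability conditions) and the single-big-jump heuristic you supply are consistent with, and a useful elaboration of, the paper's citation-based argument.
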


Note that
$$\pr{X_1>x}\leq \pr{M_1>x}\leq \pr{X_1+T_1>x}.$$
Assume now that $\tau$ and $\tilde{\tau}$ are light-tailed, that is
there exists $\theta >0$ such that $\expect{{{\rm e}}^{\theta \tau}}<\infty$ and $\expect{{{\rm e}}^{\theta \tilde{\tau}}}<\infty$
and that
\begin{equation}\label{asYsub}
F_M\in \mathcal{S}^*.
\end{equation}
Then from Foss et al. \cite{FKZ} we have that
\begin{equation}\label{eqxm}
\pr{M_1>x}\sim \pr{X_1>x}
\end{equation}
and by (\ref{reprx1}) and Corollary 3.40 in \cite{FKZ} we have that
\begin{eqnarray}\label{asx}
\overline{F}_M(x)\sim\pr{X_1>x}&\sim& \left(\pr{\tilde{\tau}>\xi}+\expect{N+1}\pr{\tilde{\tau}\leq \xi}\right)
\pr{Y>x}\nonumber\\
&=&\left(1-\pr{\tilde{\tau}\leq \xi}+\frac{\pr{\tilde{\tau}\leq \xi}(2-\pr{\tau\leq \xi})}{1-\pr{\tau\leq \xi}}\right)\overline{F}_Y(x),
\end{eqnarray}
where $Y$ is a generic claim size.
Moreover,
\begin{eqnarray*}
\mu &=& (\expect{\tilde{\tau}}-\expect{Y})+\left[\expect{N-1}\left(\expect{\tau^{\leq\xi}}-\expect{Y}\right)+(\expect{\tau^{>\xi}}-\expect{Y})\right]\pr{\tilde{\tau}\leq \xi}\\
&=& \expect{\tilde{\tau}}-\expect{Y}-\pr{\tilde{\tau}\leq \xi}\expect{N}\expect{Y}+\expect{N-1}\expect{\tau|\tau\leq\xi}\pr{\tilde{\tau}\leq \xi}+\expect{\tau|\tau>\xi}\pr{\tilde{\tau}\leq \xi}
\end{eqnarray*}
with
$$\expect{N}=\frac{1}{1-\pr{\tau\leq \xi}}=\frac{1}{\pr{\tau> \xi}}.$$

\begin{Remark}\rm{\bf Reducing to the classical model}\\
Removal of the dependence in our setting would reduce to the classical model. An independent case is referring to the situation when $\pr{\tau\leq\xi}=\pr{\tilde{\tau}\leq\xi}$. Substituting this into \eqref{asx} yields,
\begin{equation*}
\overline{F}_{M}(x)\sim \frac{1}{1-\pr{\tau\leq\xi}}\overline{F}_{Y}(x)=\expect{N}\overline{F}_{Y}(x).
\end{equation*}
In addition, it simplifies $\mu$ to
\begin{equation*}
\mu = -\expect{X_1} = \expect{N}(\expect{\tau}-\expect{Y}).
\end{equation*}
According to (\ref{infinitetime}),
\begin{equation*}
\psi(x)\sim \frac{\expect{N}}{\mu} \int_x^\infty\overline{F}_Y(u) {\rm d}u=\frac{1}{\expect{\tau}-\expect{Y}}\int_x^\infty\overline{F}_Y(u) {\rm d}u.
\end{equation*}
If we assume $\expect{Y}=\mu_Y$, $\expect{\tau}=\expect{\tilde{\tau}}=\frac{1}{\lambda}$ and the safety loading to be $\rho$ i.e., $1+\rho=1/\lambda\mu_Y$. Also, we know that $\expect{N}=1/p$. The above identity could be reduced to,
\begin{equation}
\psi(x)\sim\frac{1}{\frac{1}{\lambda}-\mu_Y}\int_x^\infty\overline{F}_Y(u) {\rm d}u=\frac{1}{\rho\mu_Y}\int_x^\infty\overline{F}_Y(u) {\rm d}u,
\end{equation}
which coinsides with the approximated ruin probability for the classic risk model with subexponential claims as shown by Theorem 1.36 in \cite{embrechts1997}.
\end{Remark}

\subsection{The intermediate case}

We now consider the case where  $X_1$ satisfies \beeq
\label{l-alpha} \pr{X_1>x+y} \sim {{\rm e}}^{-\alpha y} \pr{X_1>x}
\eneq for every fixed $y$, as $x\rightarrow\infty$, and \beeq
\label{s-alpha} \pr{X_1+X_2>x} \sim 2 \expect{{{\rm e}}^{\alpha
X_1}} \pr{X_1>x}. \eneq This is equivalent to the condition that
$X_1^+ \in {\mathcal S}(\alpha)$. The case $\alpha=0$ is treated
in Section 2.1 so we assume $\alpha>0$. Another assumption we make
is that \beeq \label{nocramer} \expect{{{\rm e}}^{\alpha X_1}}<1,
\eneq which implies that Cram\'er condition is not satisfied.
Finally, we specify the tail behavior of $M_1$. The case where
$M_1$ has a heavier tail than $X_1$ is already covered in the
previous subsection. Motivated by this, we assume that
\begin{equation}
\label{tailbalance}
\lim_{x\rightarrow\infty} \frac{\pr{M_1>x}}{\pr{X_1>x}} <\infty
\end{equation}
(we allow the limit to equal 0).
Furthermore, we assume that there exists a bounded
function $g$ such that
\beeq \label{cough}
\lim_{x\rightarrow\infty} \frac{\pr{M_1>x ; X_1 \leq
x-a}}{\pr{X_1>x}} = g(a), \eneq for all real values of $a$.

\begin{Theorem}\label{thmsalpha}
Suppose that (\ref{l-alpha})--(\ref{cough}) are satisfied.
Then
\[
\psi(x) \sim \frac{\expect{{{\rm e}}^{\alpha M}} + \expect{g(M)}}{1-\expect{{{\rm e}}^{\alpha X_1}}}\pr{X_1>x}.
\]
\end{Theorem}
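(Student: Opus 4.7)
The plan is to exploit the regenerative structure of the risk process so as to express $M$ as the maximum of the first cycle's contribution and an independent, shifted copy of itself. Since $\{(M_n, X_n)\}_{n \geq 2}$ is i.i.d.\ with the same law as $(M_1, X_1)$ and is independent of $(M_1, X_1)$, the representation $M = \sup_{n \geq 1}(M_n + S_{n-1})$ (with $S_0 = 0$) from (\ref{numer}) yields
\[
M \stackrel{d}{=} \max\bigl(M_1,\ X_1 + \tilde{M}\bigr),
\]
where $\tilde{M}$ is a copy of $M$ independent of $(M_1, X_1)$. This produces the basic decomposition
\[
\pr{M > x} = \pr{M_1 > x,\ X_1 + \tilde{M} \leq x} + \pr{X_1 + \tilde{M} > x}.
\]

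I would then estimate each term under the ansatz $\pr{M>x}\sim C\,\pr{X_1>x}$ for a constant $C$ to be determined; in particular this forces $M$ to inherit the $\mathcal{S}(\alpha)$ tail of $X_1$. For the second term, the standard $\mathcal{S}(\alpha)$-convolution identity of \cite{FKZ}, applied to the independent pair $(X_1,\tilde{M})$, gives
\[
\pr{X_1 + \tilde{M} > x} \sim \expect{e^{\alpha\tilde{M}}}\pr{X_1>x} + \expect{e^{\alpha X_1}}\pr{\tilde{M}>x} \sim \bigl(\expect{e^{\alpha M}} + C\,\expect{e^{\alpha X_1}}\bigr)\pr{X_1 > x}.
\]
For the first term, conditioning on the independent variable $\tilde{M}$ and invoking assumption (\ref{cough}) yields the pointwise convergence
\[
\frac{\pr{M_1>x,\,X_1\leq x-\tilde{M}\mid \tilde{M}}}{\pr{X_1>x}}\longrightarrow g(\tilde{M}) \quad\text{a.s.};
\]
the uniform bound $\pr{M_1>x,X_1\leq x-a}/\pr{X_1>x} \leq \pr{M_1>x}/\pr{X_1>x}$, together with the finite limit guaranteed by (\ref{tailbalance}) and the boundedness of $g$, justifies dominated convergence and delivers $\pr{M_1>x,\,X_1+\tilde{M}\leq x}\sim \expect{g(M)}\pr{X_1>x}$.

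Summing the two contributions and imposing consistency of the ansatz forces the linear equation $C = \expect{g(M)} + \expect{e^{\alpha M}} + C\,\expect{e^{\alpha X_1}}$; solving, and using (\ref{nocramer}) to ensure $1-\expect{e^{\alpha X_1}}>0$, recovers the target constant
\[
C = \frac{\expect{e^{\alpha M}} + \expect{g(M)}}{1 - \expect{e^{\alpha X_1}}}.
\]
The two main obstacles I anticipate are: (i) verifying that $\expect{e^{\alpha M}}<\infty$, which the convolution identity crucially requires, and which should be extracted from (\ref{nocramer}) by an exponential change of measure applied to the embedded random walk $S_n$ together with the tail-balance hypothesis (\ref{tailbalance}); and (ii) promoting the consistency equation into a rigorous two-sided asymptotic, typically by bounding $\limsup_{x\to\infty}\pr{M>x}/\pr{X_1>x}$ and $\liminf_{x\to\infty}\pr{M>x}/\pr{X_1>x}$ separately and showing that both are fixed points of the same affine recursion, hence must both equal $C$.
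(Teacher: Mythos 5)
The paper does not contain its own proof of this theorem: the ``proof'' is a single sentence citing Palmowski and Zwart \cite{BP1}, so there is nothing in-text to compare against. Your reconstruction nevertheless follows the standard strategy used in that line of work on perturbed random walks / regenerative processes. The distributional fixed point $M \stackrel{d}{=} \max(M_1,\,X_1 + \tilde M)$ with $\tilde M$ an independent copy of $M$ is exactly the right starting point (and is valid here because the process is zero-delay regenerative, so $\{(M_n,X_n)\}_{n\geq 1}$ is i.i.d.); the subsequent splitting $\pr{M>x}=\pr{M_1>x,\,X_1+\tilde M\leq x}+\pr{X_1+\tilde M>x}$, the $\mathcal{S}(\alpha)$ convolution identity for the second term, and the dominated-convergence treatment of the first term via \eqref{cough} and \eqref{tailbalance} all correctly identify how each hypothesis enters and produce the stated constant.

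The genuine gap is the one you label obstacle~(ii), and it is more than a technicality. Your argument is an ansatz-consistency derivation: you postulate $\pr{M>x}\sim C\pr{X_1>x}$, feed this through the decomposition, and solve for $C$. That shows that \emph{if} the limit exists, it must equal the stated constant, but it does not establish that the limit exists, and the convolution lemma you invoke already presupposes tail equivalence of $\tilde M$ and $X_1$ --- exactly what you are trying to prove. Your proposed fix (show both $\limsup$ and $\liminf$ of the ratio satisfy the same affine relation) does not follow directly from the two-term decomposition, because $\limsup$ and $\liminf$ do not split additively across the two terms without additional uniformity. The standard way to close the gap is a truncation argument: work with $M^{(N)}=\max_{1\leq n\leq N}(M_n+S_{n-1})$, for which the $\mathcal{S}(\alpha)$ asymptotics of finitely many convolutions are routine and yield $\pr{M^{(N)}>x}\sim C_N\pr{X_1>x}$, then let $N\to\infty$ and control the tail of the remaining supremum uniformly using \eqref{nocramer}. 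Obstacle~(i), finiteness of $\expect{e^{\alpha M}}$, is also nontrivial and requires combining \eqref{nocramer} (which controls the random-walk part $\sup_n S_n$) with \eqref{tailbalance} (which controls the perturbations $M_n$); this is precisely where the hypotheses beyond the usual random-walk setup are needed, and your sketch is on the right track but needs to be carried out.
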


\begin{proof}
Proof can be found in \cite{BP1}.
\end{proof}

We discuss now when conditions \eqref{l-alpha}-\eqref{cough} are satisfied.
Assume that $$F_Y\in \mathcal{S}(\alpha)$$
that is $F_Y$ satisfies assumptions (\ref{l-alpha}), (\ref{s-alpha}) and (\ref{nocramer}).
Then by representation (\ref{reprx1}) we can easily check that $X_1$ also satisfies
\begin{equation}\label{equivsalpha} \pr{X_1>x}\sim D(\alpha)\overline{F}_Y(x)
\end{equation}
and
\begin{equation}\label{Dalpha}
D(\alpha)=\expect{{{\rm e}}^{-\alpha \tilde{\tau}}}+\pr{\tilde{\tau}\leq \xi}\left(\expect{N\expect{{{\rm e}}^{\alpha
(Y-\tau^{\leq \xi})}}^{N-1}}\expect{{{\rm e}}^{-\alpha \tau^{\leq \xi}}}+\expect{{{\rm e}}^{-\alpha \tau^{>\xi}}}\right).\end{equation}
We additionally assume that $Y$ satisfies
$$ \expect{{{\rm e}}^{\alpha
X_1}}=\varphi(\alpha)<1;$$
(see \eqref{eqn:mgfx2} for the representation of $\varphi(\theta)$).
Note that the assumption (\ref{tailbalance}) is always satisfied in our case since similarly to (\ref{equivsalpha}) we have:
$$\lim_{x\rightarrow\infty} \frac{\pr{M_1>x}}{\pr{X_1>x}}\leq  \lim_{x\rightarrow\infty} \frac{\pr{X_1>x-T_1}}{\pr{X_1>x}}
\leq\frac{\expect{N\left(\expect{{{\rm e}}^{\alpha
Y}}\right)^{N-1}}+2}{D(\alpha)}<\infty.$$
Finally, conditioning on $N$, using representation (\ref{reprx1}) and property (\ref{l-alpha}) gives:
$$g(a)= \frac{1}{\expect{{{\rm e}}^{-\alpha \Xi}}}-{{\rm e}}^{-\alpha a}\pr{\Xi>a}$$
for $\Xi=\tilde{\tau}+\left(\tau^{>\xi}+\sum_{k=1}^{N^*}\tau^{\leq \xi}_k\right)\mathbf{I}_{\{\tilde{\tau}\leq \xi\}}$, where
\begin{eqnarray*}
\lefteqn{N^*=\min\left\{n\leq N:  \mathbf{I}_{\{\tilde{\tau}\leq \xi\}}\left(\sum_{k=1}^{n}(Y_k-\tau_k^{\leq\xi})\right)\right.}\\&&\qquad=\left.
\max_{l\leq N}\left[\mathbf{I}_{\{\tilde{\tau}\leq \xi\}}\left(\sum_{k=1}^l(Y_k-\tau_k^{\leq\xi}) \right)\right]\right\}.
\end{eqnarray*}

\subsection{The Cram\'er case}\label{cs}
In this subsection, we review the extension of the classical Cram\'er case from random walks to perturbed random walks
and regenerative processes.

\begin{Theorem}\label{thm:cramer}
Assume that there exists a solution $\kappa>0$ to the equation
\[
\expect{{{\rm e}}^{\kappa X_1}}=1 \mbox{ such that } m=\expect{X_1
{{\rm e}}^{\kappa X_1}}<\infty.
\]
Assume furthermore that $X_1$ is non-lattice and that $\expect{{{\rm e}}^{\kappa M_1}}<\infty$.
Then
\[
\psi(x) \sim  K {{\rm e}}^{-\kappa x}
\]
with $K=\frac 1{\kappa m} \expect{{{\rm e}}^{\kappa M_1}- {{\rm e}}^{\kappa (M+X_1)} ; M_1 > M+X_1}$ for independent $M$ of $X_1$ and $M_1$.
\end{Theorem}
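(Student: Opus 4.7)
The approach is to replicate the classical Cram\'er--Lundberg proof adapted to the perturbed random walk $V_n:=S_{n-1}+M_n$ governing $M=\sup_n V_n$, with the extra wrinkle that $M_n$ and $X_n$ are dependent within a single regeneration cycle (though i.i.d.\ across cycles). Since \cite{BP1, BP2} provide a general version of this result for regenerative processes, the plan is essentially to verify its hypotheses, but below I sketch the mechanics that yield the stated constant.

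First, I would introduce the exponential (Esscher) change of measure: define $\tilde P$ by $d\tilde P/dP|_{\mathcal{F}_n}=e^{\kappa S_n}$, where $\mathcal{F}_n=\sigma((X_k,M_k):k\leq n)$. The assumption $\expect{e^{\kappa X_1}}=1$ makes this a consistent family of probability measures, under which the cycle pairs $(X_i,M_i)$ stay i.i.d.\ with joint law weighted by $e^{\kappa X_i}$, and the tilted drift is $\tilde E[X_1]=\expect{X_1 e^{\kappa X_1}}=m>0$. Hence $S_n\to+\infty$ $\tilde P$-a.s., so the first-passage time
\[
\tau(x)\;:=\;\inf\{n\geq 1:\,S_{n-1}+M_n>x\}
\]
is an $\mathcal{F}_n$-stopping time that is $\tilde P$-a.s.\ finite for every $x$. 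By Wald's likelihood-ratio identity,
\[
\psi(x)\;=\;\pr{\tau(x)<\infty}\;=\;\tilde E\bigl[e^{-\kappa S_{\tau(x)}}\bigr].
\]

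Second, introduce the overshoot $R(x):=S_{\tau(x)-1}+M_{\tau(x)}-x\geq 0$. Since $S_{\tau(x)}=x+R(x)-M_{\tau(x)}+X_{\tau(x)}$, multiplying by $e^{\kappa x}$ gives
\[
e^{\kappa x}\psi(x)\;=\;\tilde E\bigl[\exp\{\kappa(M_{\tau(x)}-X_{\tau(x)}-R(x))\}\bigr].
\]
I would then invoke the key renewal theorem: since $X_1$ is non-lattice with finite positive mean $m$ under $\tilde P$, the joint overshoot triple $(R(x),X_{\tau(x)},M_{\tau(x)})$ converges weakly under $\tilde P$ as $x\to\infty$, and the stationary limit together with Blackwell's $1/m$ density factor produces an exponential constant. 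Finally, the distributional recursion $M\stackrel{d}{=}\max(M_1,\,X_1+M')$ with $M'$ an independent copy of $M$ lets me identify the event $\{M_1>M+X_1\}$ as the event that the first regeneration cycle realises the supremum of $V_n$; this is the contribution surviving in the limit and yields
\[
K\;=\;\frac{1}{\kappa m}\,\expect{e^{\kappa M_1}-e^{\kappa(M+X_1)};\,M_1>M+X_1}.
\]

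The main obstacle is the key-renewal step. Because $M_n$ and $X_n$ are not independent within a cycle, one cannot simply apply Blackwell's theorem to the renewal measure of $S_n$ and then tensor against an independent copy of $M$; instead $(X_n,M_n)$ must be handled as a bivariate mark, and one must prove joint convergence of the overshoot together with the decorating marks $(X_{\tau(x)},M_{\tau(x)})$ at the crossing epoch. Non-latticeness of $X_1$ and the integrability assumption $\expect{e^{\kappa M_1}}<\infty$ are used here to secure the uniform integrability needed for passing to the limit inside the expectation. Once joint convergence is in hand, turning the renewal integral into the closed-form constant $K$ is a direct computation using the max-recursion for $M$.
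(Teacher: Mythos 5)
The paper's own proof of this theorem is a bare citation to Goldie \cite{goldie}, so there is no in-paper argument to compare against line by line. Your proposal is the probabilistic dual of that argument and matches the route taken by Palmowski and Zwart \cite{BP1,BP2}, which the paper also cites here. Goldie proceeds analytically: he shows that the tilted tail $Z(t)=e^{\kappa t}\mathbb{P}(M>t)$ satisfies a renewal equation whose kernel is the proper probability measure $e^{\kappa y}\,\mathbb{P}(X_1\in dy)$ and whose inhomogeneous term comes from the max-recursion $M\stackrel{d}{=}\max(M_1,X_1+M')$; the key renewal theorem then gives $Z(t)\to K$ with the stated constant falling out as an explicit integral of the inhomogeneous term. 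Your change-of-measure/first-passage route ends at the same place, but the passage from the Wald representation $e^{\kappa x}\psi(x)=\widetilde{\mathbb{E}}\bigl[e^{\kappa(M_{\tau(x)}-X_{\tau(x)}-R(x))}\bigr]$ to the closed-form $K$ is not a \emph{direct} computation: one must establish joint weak convergence and uniform integrability of the triple $(R(x),X_{\tau(x)},M_{\tau(x)})$ under the tilted law, and since $M_n$ and $X_n$ are dependent within a cycle this requires a bivariate ladder-height/Markov-renewal decomposition rather than Blackwell's theorem alone. You rightly flag this as the main obstacle, and the hypotheses $\mathbb{E}[e^{\kappa M_1}]<\infty$ and non-latticeness of $X_1$ are precisely what that step consumes. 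In short, your approach is correct in spirit and genuinely dual to the paper's cited route; the cleanest way to close the remaining gap is either to pass to Goldie's renewal-equation form (which makes $K$ drop out automatically via the identity $(e^{\kappa M_1}-e^{\kappa(X_1+M)})^{+}=e^{\kappa\max(M_1,X_1+M)}-e^{\kappa(X_1+M)}$ and $\max(M_1,X_1+M)\stackrel{d}{=}M$), or to invoke the general regenerative result of \cite{BP1} outright, as the paper implicitly does.
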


\begin{proof}
See \cite{goldie}.
\end{proof}

It is easy to see that $K$ is bounded from above by
\begin{equation}\label{Cramerconstant0}
\bar K=\expect{{{\rm e}}^{\kappa M_1}}/(\kappa m).
\end{equation}
In fact it is even bounded above by
\begin{equation}\label{Cramerconstant}
\tilde K=\expect{{{\rm e}}^{\kappa (X_1+T_1)}}/(\kappa m).
\end{equation}

Note that by (\ref{reprx1}) the Cram\'er adjustment coefficient $\kappa>0$ solves
\[\expect{{{\rm e}}^{\kappa X_1}}=\varphi(\kappa)=1\]
for
\begin{eqnarray}
\expect{{{\rm e}}^{\kappa X_1}}&=&\tilde{p}\expect{{{\rm e}}^{\kappa Y}}\expect{{{\rm e}}^{-\kappa \tilde{\tau}}|\tilde{\tau}>\xi}
+\tilde{q}\expect{{{\rm e}}^{\kappa Y}}\expect{{{\rm e}}^{-\kappa \tilde{\tau}}|\tilde{\tau}\leq\xi}\cdot\expect{{{\rm e}}^{\kappa Y}}\expect{{{\rm e}}^{-\kappa \tau}|\tau>\xi}\nonumber\\
&&\cdot\sum_{k=1}^\infty p(1-p)^{k-1}\left[\expect{{{\rm e}}^{\kappa Y}}\expect{{{\rm e}}^{-\kappa \tau}|\tau\leq\xi}\right]^{k-1}
\nonumber\\
&=&p\tilde{q}\frac{(\expect{{{\rm e}}^{\kappa Y}})^{2}\expect{{{\rm e}}^{-\kappa \tau}|\tau>\xi}\expect{{{\rm e}}^{-\kappa \tilde{\tau}}|\tilde{\tau}\leq\xi}}{1-(1-p)\expect{{{\rm e}}^{\kappa Y}}\expect{{{\rm e}}^{-\kappa \tau}|\tau\leq\xi}}+\tilde{p}\expect{{{\rm e}}^{\kappa Y}}\expect{{{\rm e}}^{-\kappa \tilde{\tau}}|\tilde{\tau}>\xi},
\label{eqn:mgfx1}
\end{eqnarray}
where $\pr{\tilde{\tau}> \xi}=\tilde{p}, \,\pr{\tilde{\tau}\leq \xi}=1-\tilde{p}=\tilde{q}$.
The above calculations shows also that the m.g.f. $\varphi$ of $X_1$ has the following representation:
\begin{eqnarray}
\varphi(\theta)&=&p\tilde{q}\frac{(\expect{{{\rm e}}^{\theta Y}})^{2}\expect{{{\rm e}}^{-\theta \tau}|\tau>\xi}\expect{{{\rm e}}^{-\theta \tilde{\tau}}|\tilde{\tau}\leq\xi}}{1-(1-p)\expect{{{\rm e}}^{\theta Y}}\expect{{{\rm e}}^{-\theta \tau}|\tau\leq\xi}}+\tilde{p}\expect{{{\rm e}}^{\theta Y}}\expect{{{\rm e}}^{-\theta \tilde{\tau}}|\tilde{\tau}>\xi}.
\label{phitheta}
\end{eqnarray}
We can now identify constant $\tilde K$:
\begin{eqnarray}
\tilde{K}&=&\expect{{\rm e} ^{\kappa(X_1+T_1)}}/\kappa m\nonumber=\expect{{\rm e} ^{\kappa \sum_{i=1}^{N(T_1)}Y_i}}/\kappa m\nonumber\\
&=&\frac{1}{\kappa m}\sum^{\infty}_{n=1} \left(\expect{{\rm e} ^{\kappa Y}}\right)^n \pr{N=n}\nonumber\\
&=&\left(\pr{\tilde{\tau}>\xi}\expect{{\rm e} ^{\kappa Y}}+\pr{\tilde{\tau}\leq\xi}\sum^{\infty}_{n=2}\left(\expect{{\rm e} ^{\kappa Y}}\right)^n\pr{N=n}\right)\frac{1}{\kappa m}\nonumber\\
&=&\left(\pr{\tilde{\tau}>\xi}\expect{{\rm e}^{\kappa Y}}+\frac{\pr{\tilde{\tau}\leq\xi}\pr{\tau>\xi}(\expect{{\rm e}^{\kappa Y}})^2}{1-\pr{\tau\leq\xi}\expect{{\rm e}^{\kappa Y}}}\right)\frac{1}{\kappa m}.
\end{eqnarray}
under assumption that
$$m= \varphi^\prime_k(\kappa) <\infty.$$

\begin{Remark}\rm
The net profit condition (NPC) results from \eqref{reprx1}.
By \eqref{numer}, the NPC holds when $\mathbb{E}[X_1]<0$ that is when
\begin{equation*}
\mathbb{E}[X_1]=\mathbb{P}(\tilde{\tau}\leq\xi)\left[\mathbb{E}[Y]-\mathbb{E}[\tau^{>\xi}]+\mathbb{E}[N-1](\mathbb{E}[Y]-\mathbb{E}[\tau^{\leq\xi}])\right] + (\mathbb{E}[Y]-\mathbb{E}[\tilde{\tau}])<0.
\end{equation*}
\end{Remark}

\begin{Example}\label{ex7}\rm
A special example of exponentially distributed $\tau\sim {\rm Exp}(\lambda_1)$, $\tilde{\tau}\sim {\rm Exp}(\lambda_2)$ and $Y\sim {\rm Exp}(\beta)$ would lead to
\begin{eqnarray}
\varphi(\theta)&=&\frac{\lambda_1\lambda_2\left({\rm e}^{-\lambda_1\xi}-{\rm e}^{-\lambda_2\xi}\right)\frac{\hat{B}^{2}(\theta){\rm e}^{-\theta\xi}}{(\lambda_1+\theta)(\lambda_2+\theta)}+\frac{\lambda_2}{\lambda_2+\theta}\hat{B}(\theta){\rm e}^{-(\lambda_2+\theta)\xi}}{1-\frac{\lambda_1}{\lambda_1+\theta}\left(1-{\rm e}^{-(\lambda_1+\theta)\xi}\right)\hat{B}(\theta)}\label{eqn:mgfx2}\\
&=&\left[\lambda_1\lambda_2\left({\rm e}^{-\lambda_1\xi}-{\rm e}^{-\lambda_2\xi}\right)\frac{\beta^2{\rm e}^{-\xi \theta}}{{(\beta-\theta)}^2(\lambda_1+\theta)(\lambda_2+\theta)}+\frac{\lambda_2}{\lambda_2+\theta}\frac{\beta}{\beta-\theta}{\rm e}^{-(\lambda_2+\theta)\xi}\right]\div\nonumber\\
&&\left[1-\frac{\lambda_1}{\lambda_1+\theta}\left(1-{\rm e}^{-(\lambda_1+\theta)\xi}\right)\frac{\beta}{\beta-\theta}\right]
\end{eqnarray}
and
\begin{equation*}
\tilde{K} = \frac{\beta}{\beta-\kappa}\cdot\frac{\beta {\rm e}^{-\lambda_1 \xi}-\kappa{\rm e}^{-\lambda_2\xi}}{\beta{\rm e}^{-\lambda_1\xi}-\kappa}.
\end{equation*}
This gives that
\[
\lim_{x\to\infty}\psi(x){{\rm e}}^{\kappa x} \leq \frac{\beta}{\beta-\kappa}\cdot\frac{\beta {\rm e}^{-\lambda_1 \xi}-\kappa{\rm e}^{-\lambda_2\xi}}{\beta{\rm e}^{-\lambda_1\xi}-\kappa}.
\]

Moreover, since
\begin{eqnarray*}
\mathbb{E}[\tau^{\leq\xi}]&=&\mathbb{E}[\tau|\tau\leq \xi]=\frac{\frac{1}{\lambda_1}-\left(\xi+\frac{1}{\lambda_1}\right){\rm e}^{-\lambda_1\xi}}{1-{\rm e}^{-\lambda_1\xi}};\\
\mathbb{E}[\tau^{>\xi}]&=&\mathbb{E}[\tau|\tau> \xi]=\xi+\frac{1}{\lambda_1},
\end{eqnarray*}
the NPC condition is equivalent to
\begin{equation}
\left(\frac{1}{\beta}-\frac{1}{\lambda_1}\right)(1-{\rm e}^{-\lambda_2\xi})+\left(\frac{1}{\beta}-\frac{1}{\lambda_2}\right){\rm e}^{\lambda_1\xi}<0.
\label{eqn:npc}
\end{equation}

Furthermore, as a connection with Section \ref{sec: numerical}, it is worth mentioning here that the above identity should coincide with
\begin{equation}
\left(\frac{1}{\beta}-\frac{1}{\lambda_1}\right)\pi_1 + \left(\frac{1}{\beta}-\frac{1}{\lambda_2}\right)\pi_2<0,
\end{equation}
where
\begin{eqnarray}
\pi_1& =& \frac{1-{\rm e}^{-\lambda_2\xi}}{1-{\rm e}^{-\lambda_2\xi}+{\rm e}^{-\lambda_1 \xi}},\label{pi1}\\
\pi_2& =& \frac{{\rm e}^{-\lambda_1\xi}}{1-{\rm e}^{-\lambda_1\xi}-{\rm e}^{-\lambda_2\xi}},\label{pi2}
\end{eqnarray}
denote the steady state distribution $(\pi_1, \pi_2)$ in the Markovian environment of $\tau$ and $\tilde{\tau}$, which is precisely defined in Section \ref{sec:map}.
That is to say, when the process becomes stationary, the probability to have an inter-arrival time less or equal to $\xi$ (State 1) would be $\pi_1$ while that for it being larger than $\xi$ (State 2) is represented by $\pi_2=1-\pi_1$. The graph depicted in Figure \ref{Fig:steady} below shows an example of this distribution. It could be seen that the probability for State 1 in our case is monotonically increasing with $\xi$. The blue line represents the ratio of probabilities between State 1 and State 2 thus having the same monotonicity as the green line. This will be analysed further via simulation.
\begin{figure}[!htb]
\centering
\scalebox{0.3}[0.3]{\includegraphics{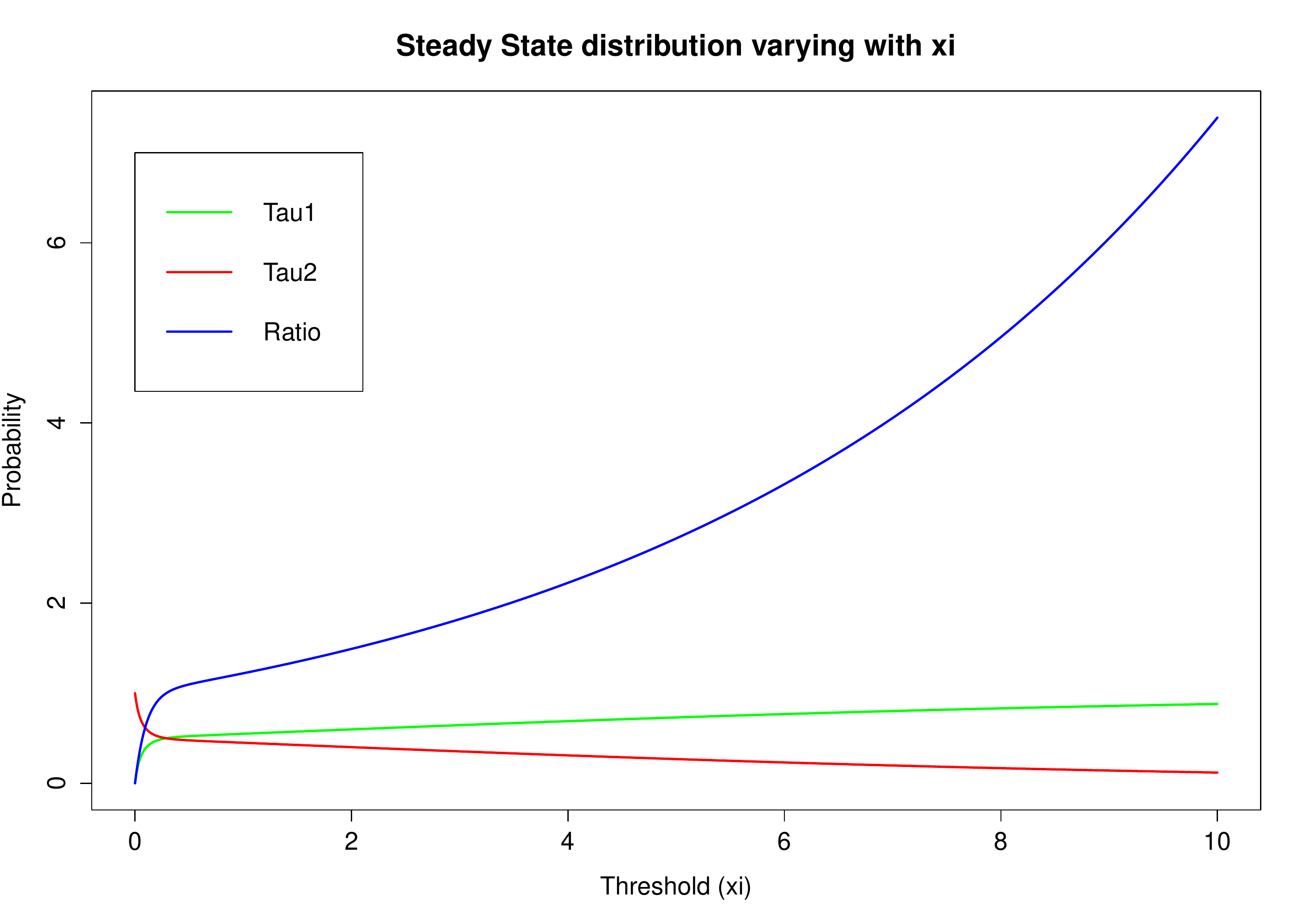}}
\caption{Steady State distribution when $\lambda_1=0.2, \lambda_2=10$}
\label{Fig:steady}
\end{figure}
\end{Example}

\section{Numerical Results}\label{sec: numerical}
In this section, we explain several methods to simulate the ruin probabilities for our model. Initially, we tried the crude Monte Carlo simulation, but as in the classical case, several issues remain including determining a maximum time range so that it approximates an infinite time ruin probability. Then we employed the Importance Sampling technique, which allows us to simulate ruin probabilities under a new measure where ruin happens for sure. However, this does not give us the ruin probability as defined in our original problem. Hence, after a deeper analysis, the construction of a Markov additive further assists in developing a more sophisticated importance sampling technique for our model when the inter claim times as well as claims are exponentially distributed. Using this method, ruin probabilities could be simulated via a closed form formulae. At the end of this section, we present a case study using the crude Monte Carlo simulation where ruin probabilities for our model are compared with the ones under a classical setting, aiming at answering the question which originated this work. We also investigate the influence of two different claim distributions on simulated ruin probabilities.

\subsection{Importance Sampling and Change of Measure}
One cause of the drawback of using the crude Monte Carlo simulation is that ruin probabilities are inefficient, i.e., ruin probability tends to zero very quickly, when the initial reserve $u$ is large. This has been explained by the Cram\'{e}r theorem that asymptotically ruin probability has an exponentially decay with respect to $u$. The other reason of not simply adopting a crude Monte Carlo simulation is that we are anyway trying to simulate an infinite time ruin probability under a finite time horizon. In order to overcome this effect, the importance sampling technique has been brought in. The key idea behind is to find an equivalent probability measure under which the process has a probability of ruin equal to 1.\\

Let us start from something trivial. For the moment, we only consider the "ruin probability" when the time between regenerative epochs is ignored. In other words, we now look at our process from a macro perspective and it is renewal at each regenerative time epoch, so we omit the situations where ruin happens within these intervals. We refer to it as the "macro" process which coincides with a classical risk process and its corresponding ruin probability as the "macro" ruin probability in the sequel. We can then define the macro ruin time as
\begin{equation}
T^{*}(x)= \inf\:\{T_i\geq 0:\,U(T_i)<0, \,i = 1, \ldots \mid U(0)=x\}.
\label{eqn:mruintime}
\end{equation}
Consequently, the macro ruin probability denoted by $\psi^{*}(x) =\mathbb{P}(T^{*}(x)<\infty \mid U(0)=x)$ should be smaller or equal than the ruin probability associated with our actual risk process $\psi(u)$. But for illustration purposes, it is worth covering the nature of change of measure under the framework of this macro process first before we dig into more complex scenarios.
\begin{Theorem}
Assume that there exists a $\kappa$ such that $\varphi(\kappa)=1$.
Consider a new measure $\mathbb{Q}$ such that:
\begin{eqnarray*}
\mathbb{Q}(Y\in dy)&=&\frac{\mathbb{P}(Y\in dy){\rm e}^{\kappa Y}}{\mathbb{E}[{\rm e}^{\kappa Y}]},\\
\mathbb{Q}(\tau^{\leq\xi}\in dx)&=&\frac{\mathbb{P}(\tau\in dx){\rm e}^{-\kappa x}}{\int^{\xi}_{0} {\rm e}^{-\kappa x}\mathbb{P}(\tau\in dx)}, \, x\in(0, \xi],\\
\mathbb{Q}(\tau^{>\xi}\in dx)&=&\frac{\mathbb{P}(\tau\in dx){\rm e}^{-\kappa x}}{\int^{\infty}_{\xi} {\rm e}^{-\kappa x}\mathbb{P}(\tau\in dx)}, \, x\in(\xi, \infty)
\end{eqnarray*}
with $\tilde{\tau}^{\leq\xi}$ and $\tilde{\tau}^{>\xi}$ defined in a similar way.
Then we could establish the same relation as in the classical case for the m.g.f. of $X_1$,
\begin{equation}
\varphi_{\mathbb{Q}}(\theta)=\varphi(\theta+\kappa)/\varphi(\kappa)=\varphi(\theta+\kappa).
\label{phirelation}
\end{equation}
\label{thm:chgmeasure}
\end{Theorem}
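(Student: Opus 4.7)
The plan is to recognise that the three (and the analogous $\tilde{\tau}$-) tilts specified in the statement assemble into a single exponential change of measure on the $\sigma$-algebra of the first regeneration cycle, namely
\[
\frac{d\mathbb{Q}}{d\mathbb{P}}\bigg|_{\mathcal{F}_{T_1}} \;=\; \frac{{\rm e}^{\kappa X_1}}{\varphi(\kappa)} \;=\; {\rm e}^{\kappa X_1},
\]
where the second equality uses the hypothesis $\varphi(\kappa)=1$. Once this factorisation is in place the result is immediate: $\varphi_{\mathbb{Q}}(\theta) = \mathbb{E}_{\mathbb{Q}}[{\rm e}^{\theta X_1}] = \mathbb{E}[{\rm e}^{\theta X_1}\,{\rm e}^{\kappa X_1}] = \varphi(\theta+\kappa)$, which yields both equalities in (\ref{phirelation}).

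First I would use the representation (\ref{reprx1}) to split $X_1$ across the event $\{\tilde{\tau}\le\xi\}$ versus its complement and, on the former, expand ${\rm e}^{\kappa X_1}$ as a product over the independent building blocks ${\rm e}^{\kappa Y_0}{\rm e}^{-\kappa\tilde{\tau}^{\le\xi}}$, ${\rm e}^{\kappa Y_k}{\rm e}^{-\kappa\tau_k^{\le\xi}}$ for $k=1,\ldots,N-1$, and ${\rm e}^{\kappa Y_N}{\rm e}^{-\kappa\tau_N^{>\xi}}$. Independence of the $Y_k$'s, the $\tau_k$'s and the geometric count $N$ places each factor in its own integral.

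Next I would match each such factor against the corresponding Radon--Nikodym density written in the statement: ${\rm e}^{\kappa Y}/\mathbb{E}[{\rm e}^{\kappa Y}]$ is precisely the density defining the $\mathbb{Q}$-law of $Y$, and ${\rm e}^{-\kappa x}\mathbf{1}_{(0,\xi]}(x)$ normalised by $\int_0^\xi {\rm e}^{-\kappa x}\mathbb{P}(\tau\in dx)$ is the $\mathbb{Q}$-density of $\tau^{\le\xi}$; the $\tilde{\tau}$-versions are analogous. Collecting the normalising constants and summing the induced geometric series in $N$ (whose success parameter under $\mathbb{P}$ is $p=\mathbb{P}(\tau>\xi)$), the total normaliser coincides with the right-hand side of (\ref{eqn:mgfx1}) evaluated at $\kappa$ and hence equals $\varphi(\kappa)=1$. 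This confirms that the product of the component tilts reproduces ${\rm e}^{\kappa X_1}$ exactly; Step 3 is then just the one-line computation displayed above.

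The main obstacle is this middle step, where one must carefully track the $\mathbb{Q}$-images of the probabilities $p,\tilde{p},1-p,\tilde{q}$ and verify that the geometric series in (\ref{eqn:mgfx1}) telescopes correctly once every individual factor has been exponentially tilted. A purely computational alternative---arguably safer but more laborious---is to substitute the $\mathbb{Q}$-tilted moment generating functions directly into the explicit formula (\ref{phitheta}) for $\varphi_{\mathbb{Q}}(\theta)$ and simplify, invoking $\varphi(\kappa)=1$ only at the very end; this bypasses the joint change-of-measure interpretation at the cost of a lengthier algebraic manipulation.
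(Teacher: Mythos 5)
Your proposal is correct and, once unpacked, carries out exactly the same algebra as the paper's proof — the paper expands $\varphi(\theta+\kappa)$ via \eqref{phi+} and recognises each factor $\mathbb{E}[{\rm e}^{(\theta+\kappa)Y}]=\mathbb{E}[{\rm e}^{\kappa Y}]\,\mathbb{E}_{\mathbb{Q}}[{\rm e}^{\theta Y}]$ (and similarly for the conditional $\tau$'s), while you run the same factorisation in reverse to exhibit ${\rm e}^{\kappa X_1}$ as the product of the individual tilts. Your framing via the cycle Radon--Nikodym derivative $d\mathbb{Q}/d\mathbb{P}|_{\mathcal{F}_{T_1}}={\rm e}^{\kappa X_1}$ is conceptually cleaner and explains \emph{why} the shift relation $\varphi_{\mathbb{Q}}(\theta)=\varphi(\theta+\kappa)$ must hold, but it does not shortcut the work: the ``one-line'' Step~3 is only available after the factorisation in Step~2, which is the same calculation the paper performs. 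One thing worth flagging explicitly — and you touch on it — is that the theorem statement specifies only the $\mathbb{Q}$-marginals of $Y$, $\tau^{\leq\xi}$, $\tau^{>\xi}$ and their $\tilde{\tau}$-analogues, not the $\mathbb{Q}$-probability of $\{\tilde{\tau}\leq\xi\}$ or the $\mathbb{Q}$-law of the geometric count $N$; taking ${\rm e}^{\kappa X_1}$ as the defining Radon--Nikodym derivative automatically fixes these as $\tilde{p}_{\kappa}$ and ${\rm Geo}(p_{\kappa})$, which is precisely what the paper's normalising constants produce. So the two proofs coincide modulo presentation; yours makes the change-of-measure structure explicit, the paper's leaves it implicit in the matched factors.
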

\begin{proof}
Rewriting the equation \eqref{eqn:mgfx2} we derive:
\begin{eqnarray}
\varphi(\theta+\kappa)&=&\mathbb{E}[{\rm e}^{(\theta+\kappa)Y}]\mathbb{E}[{\rm e}^{-(\theta+\kappa)\tilde{\tau}}, \tilde{\tau}>\xi]+\mathbb{E}[{\rm e}^{(\theta+\kappa)Y}]\mathbb{E}[{\rm e}^{-(\theta+\kappa)\tilde{\tau}}, \tilde{\tau}\leq\xi]\nonumber\\
&&\cdot\mathbb{E}[{\rm e}^{(\theta+\kappa)Y}]\mathbb{E}[{\rm e}^{-(\theta+\kappa)\tau}, \tau>\xi]\sum_{k=1}^{\infty}\left((1-p)\mathbb{E}[{\rm e}^{(\theta+\kappa)Y}]\mathbb{E}[{\rm e}^{-(\theta+\kappa)\tau}, \tau\leq\xi]\right)^{k-1}
\label{phi+}
\end{eqnarray}

First, we note that
\begin{equation}
\mathbb{E}[{\rm e}^{(\theta+\kappa)Y}]=\int{\rm e}^{(\theta+\kappa)Y}\mathbb{P}(Y\in dy)=\mathbb{E}[{\rm e}^{\kappa Y}]\int {\rm e}^{\theta Y}\mathbb{Q}(Y\in dy)=\mathbb{E}[{\rm e}^{\kappa Y}]\mathbb{E}_{\mathbb{Q}}[{\rm e}^{\theta Y}].\label{sepy}
\end{equation}
Thus for $\tau^{\leq\xi}, \tau^{>\xi}$,
\begin{eqnarray}
\mathbb{E}[{\rm e}^{-(\theta+\kappa)\tau}, \tau>\xi]&=&\mathbb{E}[{\rm e}^{-\kappa \tau},\tau>\xi]\mathbb{E}_{\mathbb{Q}}[{\rm e}^{-\theta\tau^{>\xi}}],\\
\mathbb{E}[{\rm e}^{-(\theta+\kappa)\tau}, \tau\leq\xi]&=&\mathbb{E}[{\rm e}^{-\kappa \tau},\tau\leq\xi]\mathbb{E}_{\mathbb{Q}}[{\rm e}^{-\theta\tau^{\leq\xi}}].
\label{septau}
\end{eqnarray}
Note that $\tilde{\tau}^{\leq\xi}, \tilde{\tau}^{>\xi}$ have the same form. Then the equation (\ref{phi+}) could be modified into:
\begin{eqnarray*}
\varphi(\theta+\kappa)&=&\mathbb{E}[{\rm e}^{\kappa Y}]\mathbb{E}[{\rm e}^{-\kappa \tilde{\tau}},\tilde{\tau}>\xi]\cdot\left[\mathbb{E}_{\mathbb{Q}}[{\rm e}^{\theta Y}]\mathbb{E}_{\mathbb{Q}}[{\rm e}^{-\theta\tilde{\tau}^{>\xi}}]\right]\\
&+&\mathbb{E}[{\rm e}^{\kappa Y}]\mathbb{E}[{\rm e}^{-\kappa \tilde{\tau}},\tilde{\tau}\leq\xi]\cdot\left[\mathbb{E}_{\mathbb{Q}}[{\rm e}^{\theta Y}]\mathbb{E}_{\mathbb{Q}}[{\rm e}^{-\theta\tilde{\tau}^{\leq\xi}}]\right]\\
&\cdot&\mathbb{E}[{\rm e}^{\kappa Y}]\mathbb{E}[{\rm e}^{-\kappa \tau},\tau>\xi]\cdot\left[\mathbb{E}_{\mathbb{Q}}[{\rm e}^{\theta Y}]\mathbb{E}_{\mathbb{Q}}[{\rm e}^{-\theta\tau^{>\xi}}]\right]\\
&\cdot&\sum_{k=1}^{\infty}\left(\mathbb{E}[{\rm e}^{\kappa Y}]\mathbb{E}[{\rm e}^{-\kappa \tau},\tau\leq\xi]\right)^{k-1}\cdot\left[\mathbb{E}_{\mathbb{Q}}[{\rm e}^{\theta Y}]\mathbb{E}_{\mathbb{Q}}[{\rm e}^{-\theta\tau^{\leq\xi}}]\right]^{k-1}.
\end{eqnarray*}

Now let,
 \begin{eqnarray*}
\tilde{p}_{\kappa}&=&\mathbb{E}[{\rm e}^{\kappa Y}]\mathbb{E}[{\rm e}^{-\kappa \tilde{\tau}},\tilde{\tau}>\xi], \quad\tilde{q}_{\kappa}=1-\tilde{p}_{\kappa},\\
p_\kappa&=&(\mathbb{E}[{\rm e}^{\kappa Y}])^2\mathbb{E}[{\rm e}^{-\kappa \tilde{\tau}},\tilde{\tau}\leq\xi]\mathbb{E}[{\rm e}^{-\kappa \tau},\tau>\xi],\quad q_\kappa=1-p_\kappa,
\end{eqnarray*}
Then,
\begin{eqnarray*}
\varphi(\theta+\kappa)&=&\tilde{p}_{\kappa}\cdot\left[\mathbb{E}_{\mathbb{Q}}[{\rm e}^{\theta Y}]\mathbb{E}_{\mathbb{Q}}[{\rm e}^{-\theta\tilde{\tau}^{>\xi}}]\right]+p_\kappa\tilde{q}_\kappa\left[(\mathbb{E}_{\mathbb{Q}}[{\rm e}^{\theta Y}])^2\mathbb{E}_{\mathbb{Q}}[{\rm e}^{-\theta\tilde{\tau}^{\leq\xi}}]\mathbb{E}_{\mathbb{Q}}[{\rm e}^{-\theta\tau^{>\xi}}]\right]\\
&&\cdot\sum_{k=1}^{\infty}\left(1-p_\kappa\right)^{k-1}\cdot\left[\mathbb{E}_{\mathbb{Q}}[{\rm e}^{\theta Y}]\mathbb{E}_{\mathbb{Q}}[{\rm e}^{-\theta\tau^{\leq\xi}}]\right]^{k-1}\\
&=&\varphi_{\mathbb{Q}}(\theta).
\end{eqnarray*}
\end{proof}

To analyse \eqref{phirelation} further, $\varphi_\mathbb{Q}(\theta)$ can be considered as if the function $\varphi(\theta)$ shifted to the left by $\kappa$. We know that the net profit condition for the macro process requires $\expect{X_1}<0$, i.e., $\varphi'(0)<0$. Additionally, \eqref{phitheta} should have a positive root $\kappa$ if the tail of the claim cost distribution is exponentially bounded. That is to say, $\varphi'(0)>0$ would result in a positive drift of the macro claim surplus process and then cause a macro ruin to happen for certain. The new m.g.f. $\varphi_{\mathbb{Q}}(\theta)=\varphi(\theta+\kappa)$ makes this true. Hence
we can write for a macro ruin probability as
\begin{equation*}
\psi^{*}(x)=\mathbb{E}[{\bf 1}_{T^{*}(x)<\infty}]=\mathbb{E}_{\mathbb{Q}}[{\rm e}^{-\kappa S(T^{*}(x))+T^{*}(x)\ln\varphi(\kappa) }{\bf 1}_{T^{*}(x)<\infty}]
\end{equation*}
with $\mathbb{E}_{\mathbb{Q}}[{\bf 1}_{T^{*}(x)<\infty}] = 1$.
For a strict and detailed proof please refer to \cite{RP} (Chapter IV. Theorem 4.3). Moreover, from
\eqref{phirelation} it follows that
\begin{equation}\label{x1new}
\mathbb{P}_{\mathbb{Q}}(X_1\in dy) = \frac{\mathbb{P}(X_1\in dy) {\rm e}^{\kappa y}}{\int_R \mathbb{P}(X_1\in dz) {\rm e}^{\kappa z} dz}
\end{equation}
and $\mathbb{Q}$ is absolutely continuous with respect of $\mathbb{P}$ (up to time $n$)
with a likelihood ratio:
\begin{equation}
L_n =
 {\rm e}^{n\ln\varphi(\kappa)-\kappa \sum_{i = 1}^{n} X_{i}}.
\end{equation}
 Define a new stopping time  $N^*(x) =\inf \{n\geq 0;\;S_n>x\}$. Note that the event $\{N^*(x)<\infty\}$ is equivalent to $\{T^*(x)<\infty\}$.
 From the Optional Stopping Theorem it follows that
 for any set $G\subseteq\{N^*(x) < \infty\}$ we have
 \begin{equation*}
 \mathbb{P}\{G\} = \mathbb{E}_{\mathbb{Q}}\left[\frac{1}{L_{N^*(x)}}; \,G\right].
 \end{equation*}
See \cite[Chapter III. Theorem 1.3]{RP} for more details.\\

This means that we could simulate macro ruin probabilities under the new measure $\mathbb{Q}$ where the ruin happens with probability $1$.
We do it using new law of $X_1$ given in \eqref{x1new} and to each ruin even we add weight $\frac{1}{L_{N^*(x)}}$ where $N^*(x)$ is the observed macro ruin time.
Summing all events with weights produces the ruin probability $\psi^*(x)$.
In this way we can avoid infinite time simulations. For the case when everything is exponentially distributed as it was considered in Example \ref{ex7}
we have that under $\mathbb{Q}$ the simulation should be made according to new parameters:
\begin{eqnarray*}
&Y^{(\kappa)}\sim{\rm Exp}(\beta-\kappa), \\
&{\tilde\tau}^{>\xi}\sim{\rm Exp}(\lambda_2+\kappa)\; \text{on} \;(\xi,\infty),\\
&\tau^{>\xi}\sim{\rm Exp}(\lambda_1+\kappa)\; \text{on} \;(\xi,\infty),\\
&\tau^{\leq\xi}\sim{\rm Exp}(\lambda_1+\kappa)\; \text{on} \;(0,\xi],\\
&\tilde{\tau}^{\leq\xi}\sim{\rm Exp}(\lambda_2+\kappa)\; \text{on} \;(0,\xi].
\end{eqnarray*}

In this case $X_1$ has the law of
\begin{equation}
\mathbf{I}_{\{Z>\tilde{p}_\kappa\}}(Y_0-\tilde{\tau}_0^{>\xi})+\mathbf{I}_{\{Z\leq\tilde{p}_\kappa\}}\left(\sum_{i=1}^{N-1}(Y_i-\tilde{\tau}_i^{\leq\xi})+\left(Y_{N}-\tilde{\tau}^{>\xi}_{N}\right)+\left(Y_{0}-\tilde{\tau}^{\leq\xi}_{0}\right)\right),
\end{equation}
\label{chgprocess}
where $N\sim {\rm Geo}(p_\kappa)$ and $Z\sim U(0,1)$.

\subsection{Embedded Markov additive process}
\label{sec:map}
To get more precise simulation results avoiding the macro ruin probability giving lower estimate only, we have to understand the structure of our process better.
To implement this, we will use the theory of discrete-time Markov Additive Processes. For simplicity, we assume everything to be exponential distributed with  $\tau\sim {\rm Exp}(\lambda_1)$, $\tilde{\tau}\sim {\rm Exp}(\lambda_2)$ and $Y\sim {\rm Exp}{(\beta)}$, respectively.\\

Recall our process described by \eqref{eqn:XM}, note that ruin happens only at claim arrivals $\sigma_k=\sum_{i=1}^k\tau_i$ and $\sigma_0=0$. From time $\sigma_k$ to $\sigma_{k+1}$, the distribution of the increment $S(\sigma_{k+1})-S(\sigma_k)$ is only dependents on the relation between $\tau_k$ and $\xi$. Hence, we could transfer the original model $S_n$
given in \eqref{numer}
into a new one $(S_n,J_n)$ ($n\geq 0$)
by adding a Markov state process $\{J_n\}_{n\geq0}$ defined on $E=\{1, 2\}$. The index $i \in E$ represents the occupying state of $\{J_k\}$ at time $\sigma_k$.
For instance, state $1$ describes a status where the current inter-arrival time is less or equal than $\xi$ while state $2$ refers to the opposite situation. For convenience, we construct $\tau_0$ based on the choice of $J_0$: $J_0=1$ implies $\tau_0\leq\xi$ and $\tau_0>\xi$ otherwise. Note that the two state Markov chain $\{J_n\}$ has a transition probability matrix as follows with the $ij^{th}$ element being $p_{ij},\; i, j\in E$.
\begin{eqnarray*}
\bf{P}= \left[\begin{array}{ccc}
q & p\\
\tilde{q} & \tilde{p} \end{array}\right],
\end{eqnarray*}
where $p=\pr{\tau> \xi}, q=1-p=\pr{\tau\leq \xi}$ and $\tilde{p}=\pr{\tilde{\tau}> \xi}, \,\tilde{q}=1-\tilde{p}=\pr{\tilde{\tau}\leq \xi}$. We also define a new process $\{S_n\}_{n\geq0}$ whose increment $\Delta S_{n+1}=S_{n+1}-S_{n}$ is governed by $\{J_n\}$. More specifically, two scenarios could be analysed to explain this process. Given $n=0,1,\ldots$, scenario 1 is when $J_n=1$,  i.e., $\tau_n\leq \xi$ and $\tau_{n+1}\overset{d}{=}\tau$. Then comparing $\tau$ with $\xi$, there is a chance $q$ of obtaining $J_{n+1}=1$ given $\tau\leq\xi$, and $p$ having $J_{n+1}=2$ given $\tau>\xi$, with the corresponding increment being $\Delta S_{n+1}\overset{d}{=} Y-\tau^{\leq \xi}$ and $\Delta S_{n+1}\overset{d}{=} Y-\tau^{> \xi}$, respectively. On the contrary, scenario 2 represents the situation where the current state is $J_n =2$, i.e., $\tau_n> \xi$ and $\tau_{n+1}\overset{d}{=}\tilde{\tau}$. Thus, all the variables above are presented in the same way only with a tilde sign added on $\tau$, $p$ and $q$.\\

$Z_n=(S_n, J_n)$ is a discrete time bivariate Markov process also referred to as a discrete-time Markov additive process (MAP).
The moment of ruin is the first passage time of $S_n$ over level $x>0$, defined by
$$T^{(i)}(x)=\inf\{n\in\mathbb{N}:\;S_n>u |Z_0=(0,i)\},\qquad \hbox{ for }i=1,2.$$
Without loss of generality, assume that $\sigma_{{T^{(2)}(x)}} = T(x)$. Then the event $\{T^{(2)}(x)<\infty\}$ is equivalent to $\{T(x)<\infty\}$.
This implies that
$$\psi(x)=\mathbb{P}(T^{(2)}(x)<\infty).$$

To perform simulation
we will derive now the special representation of the underlying ruin probability using
new change of measure.
We start from identifying
a kernel matrix $F_{ij}(dx)$ with the $ij^{th}$ entry given by $F_{ij}(dx) = \mathbb{P}_{i}(J_1 = j, \,\Delta S_1 \in dx)$.
Here $\mathbb{P}_i$ and $\mathbb{E}_i$ denotes the probability measure conditional on the event $\{J_0=i\}$
and its corresponding expectation, respectively. Then for $\theta>0$, a m.g.f of the measure $F_{ij}(dx)$ is $\hat{F}_{ij}[\theta]=\mathbb{E}_i [{\rm e}^{\theta \Delta S_1}; J_1= j]$
with
\begin{eqnarray*}
\hat{\bf{F}}[\theta] = \left[\begin{array}{ccc}
\mathbb{E}({ \rm e}^{\theta Y} {\rm e}^{-\theta \tau}; \tau \leq\xi ) & \mathbb{E}({\rm e}^{\theta Y}{\rm e}^{-\theta  \tau}; \tau > \xi) \\
\mathbb{E}({\rm e}^{\theta Y} {\rm e}^{-\theta \tilde{\tau}}; \tilde{\tau} \leq\xi ) & \mathbb{E}({\rm e}^{\theta Y}{\rm e}^{-\theta \tilde{\tau}}; \tilde{\tau} >\xi)
\end{array}\right].
\end{eqnarray*}
Additionally, based on the additive structure of the process $Z_n$ for $\hat{F}_{n,ij}[\theta] = \mathbb{E}_i[{\rm e}^{\theta (S_n-S_0)}; J_n= j]$
we have:
$$\hat{{\bf F}}_n[\theta] = (\hat{{\bf F}}[\theta])^n.$$

We will now present few facts that will be used in the main construction.
\begin{Lemma}
We have,
\begin{equation}
\mathbb{E}_{J_{n}}[{\rm e}^{\theta(S_{n+1}-S_n)}v_{J_{n+1}}^{(\theta)}]= \lambda(\theta)v_{J_n}^{(\theta)},
\end{equation}
where $\lambda(\theta)$ is the eigenvalue of $\hat{\bf {F}}[\theta]$ and ${\bf v}^\theta=(v_1^\theta, v^\theta_2)^{T}$ is the corresponding right eigenvector.
\end{Lemma}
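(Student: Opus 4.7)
The plan is to unfold the claim directly from the definition of the MAP kernel and then apply the eigenvector equation. The identity $\mathbb{E}_{J_n}[\cdot]$ should be read as the conditional expectation given $J_n$, so the result to prove is really an identity that, for each fixed $i\in\{1,2\}$, reads $\mathbb{E}_i[e^{\theta(S_{n+1}-S_n)}v_{J_{n+1}}^{(\theta)}] = \lambda(\theta)v_i^{(\theta)}$. Once this reduction is made the proof is a one-line matrix-vector computation.

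First I would invoke the time-homogeneity of the MAP $(S_n,J_n)$: by the construction in the paragraph preceding the lemma, the conditional law of $(S_{n+1}-S_n,J_{n+1})$ given $J_n=i$ coincides with the law of $(\Delta S_1,J_1)$ under $\mathbb{P}_i$. This is because the increment from $\sigma_n$ to $\sigma_{n+1}$ depends on the past only through the current state $J_n$ (whether the latest inter-arrival time was $\leq\xi$ or $>\xi$), and the underlying sequences $(Y_k)$, $(\tau_k)$, $(\tilde\tau_k)$ are i.i.d. Consequently we may replace $n+1$ by $1$ and work under $\mathbb{P}_i$.

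Next I would decompose the expectation by conditioning on $J_1$:
\begin{equation*}
\mathbb{E}_i\bigl[e^{\theta \Delta S_1} v_{J_1}^{(\theta)}\bigr]
= \sum_{j\in E} v_j^{(\theta)}\,\mathbb{E}_i\bigl[e^{\theta\Delta S_1};\,J_1=j\bigr]
= \sum_{j\in E} \hat F_{ij}[\theta]\, v_j^{(\theta)}
= \bigl(\hat{\bf F}[\theta]\,{\bf v}^{(\theta)}\bigr)_i,
\end{equation*}
where the second equality uses the definition $\hat F_{ij}[\theta]=\mathbb{E}_i[e^{\theta\Delta S_1};J_1=j]$ given just before the lemma. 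Finally, since ${\bf v}^{(\theta)}$ is by assumption a right eigenvector of $\hat{\bf F}[\theta]$ associated with the eigenvalue $\lambda(\theta)$, we have $\hat{\bf F}[\theta]\,{\bf v}^{(\theta)}=\lambda(\theta)\,{\bf v}^{(\theta)}$, and the $i$-th coordinate gives exactly $\lambda(\theta)\,v_i^{(\theta)}$, proving the claim.

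There is no real obstacle here; the only point that deserves a sentence of justification is the time-homogeneity used in the first step, as it relies on the fact that the pair $(\Delta S_{n+1},J_{n+1})$ is conditionally independent of $(S_0,J_0,\dots,S_{n-1},J_{n-1})$ given $J_n$, which is built into the construction of the MAP via the transition matrix ${\bf P}$ and the four conditional increment distributions $Y-\tau^{\leq\xi}$, $Y-\tau^{>\xi}$, $Y-\tilde\tau^{\leq\xi}$, $Y-\tilde\tau^{>\xi}$. Once this is noted, the remainder is just the elementary fact that $\mathbb{E}_i[e^{\theta\Delta S_1}v_{J_1}^{(\theta)}]$ is the $i$-th entry of $\hat{\bf F}[\theta]{\bf v}^{(\theta)}$.
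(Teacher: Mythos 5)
Your proof is correct and follows essentially the same route as the paper's: the paper writes the computation in one line as $\mathbb{E}_{J_n}[{\rm e}^{\theta(S_{n+1}-S_n)}v_{J_{n+1}}^{(\theta)}]={\bf e}_{J_n}^T \hat{\bf F}_1[\theta]{\bf v}={\bf e}_{J_n}^T\lambda(\theta){\bf v}=\lambda(\theta)v_{J_n}^{(\theta)}$, which is exactly your decomposition over $J_1$ followed by the eigenvector equation, written in matrix notation. Your additional remark on the time-homogeneity of the MAP is implicit in the paper's use of $\hat{\bf F}_1[\theta]$ for the increment from $n$ to $n+1$, so you have merely made that step explicit.
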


\begin{proof}
Note that
\begin{eqnarray*}
\mathbb{E}_{J_n}[{\rm e}^{\theta(S_{n+1}-S_n)}v_{J_{n+1}}^{(\theta)}]={\bf e}_{J_n}^T \hat{\bf {F}}_1[\theta]{\bf v}={\bf e}_{J_n}^T\lambda(\theta){\bf v}=\lambda(\theta)v_{J_n}^{(\theta)},
\end{eqnarray*}
where ${\bf e}_{J_n}$ is a standard basis vector. This completes the proof.
\end{proof}

\begin{Lemma}
The following sequence
\begin{equation}
L_n={\rm e}^{\theta S_n-n\ln \lambda(\theta)} \frac{v_{J_{n}}^{(\theta)}}{v_{J_{0}}^{(\theta)}}
\end{equation}
is a discrete-time martingale.
\label{lem:martingale}
\end{Lemma}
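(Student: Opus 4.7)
The plan is to verify the defining martingale identity $\mathbb{E}[L_{n+1}\mid \mathcal{F}_n]=L_n$ directly, where $\mathcal{F}_n=\sigma(Z_0,\dots,Z_n)$ is the natural filtration of the MAP. The only non-trivial ingredient is the preceding lemma, which already packages the one-step computation; the rest is bookkeeping.

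First, I would note that $L_n$ is $\mathcal{F}_n$-measurable and nonnegative, so integrability reduces to showing $\mathbb{E}[L_n]<\infty$, which will follow inductively from the martingale identity (and the standing assumption, implicit in the definition, that $\mathbf{v}^{(\theta)}$ has strictly positive entries so that $v_{J_0}^{(\theta)}$ is a valid normalizer). Then I would compute
\begin{equation*}
\mathbb{E}[L_{n+1}\mid \mathcal{F}_n]
=\frac{e^{\theta S_n-(n+1)\ln\lambda(\theta)}}{v_{J_0}^{(\theta)}}\,
\mathbb{E}\bigl[e^{\theta(S_{n+1}-S_n)}\,v_{J_{n+1}}^{(\theta)}\,\bigm|\,\mathcal{F}_n\bigr],
\end{equation*}
pulling out the $\mathcal{F}_n$-measurable factors $e^{\theta S_n}$ and $1/v_{J_0}^{(\theta)}$, and separating $e^{-(n+1)\ln\lambda(\theta)}$.

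The key step is to use the Markov additive structure: conditional on $\mathcal{F}_n$, the increment $(S_{n+1}-S_n,J_{n+1})$ depends only on $J_n$, so
\begin{equation*}
\mathbb{E}\bigl[e^{\theta(S_{n+1}-S_n)}\,v_{J_{n+1}}^{(\theta)}\,\bigm|\,\mathcal{F}_n\bigr]
=\mathbb{E}_{J_n}\bigl[e^{\theta(S_{n+1}-S_n)}\,v_{J_{n+1}}^{(\theta)}\bigr].
\end{equation*}
By the previous lemma this equals $\lambda(\theta)\,v_{J_n}^{(\theta)}$. Substituting back,
\begin{equation*}
\mathbb{E}[L_{n+1}\mid \mathcal{F}_n]
=\frac{e^{\theta S_n-(n+1)\ln\lambda(\theta)}}{v_{J_0}^{(\theta)}}\,\lambda(\theta)\,v_{J_n}^{(\theta)}
=\frac{e^{\theta S_n-n\ln\lambda(\theta)}\,v_{J_n}^{(\theta)}}{v_{J_0}^{(\theta)}}=L_n,
\end{equation*}
since $\lambda(\theta)\,e^{-(n+1)\ln\lambda(\theta)}=e^{-n\ln\lambda(\theta)}$.

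I do not expect any real obstacle: the previous lemma already does the genuine work (identifying $\mathbf{v}^{(\theta)}$ as the right Perron eigenvector of $\hat{\mathbf{F}}[\theta]$), so this proof is essentially a one-line application of the Markov property plus that eigenvector identity. The only point that deserves mention is the positivity of $v_{J_0}^{(\theta)}$, which is guaranteed in this two-state irreducible setting by Perron--Frobenius applied to the nonnegative matrix $\hat{\mathbf{F}}[\theta]$, ensuring $L_n$ is well-defined.
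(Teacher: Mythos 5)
Your proof is correct and follows essentially the same route as the paper's: pull out the $\mathcal{F}_n$-measurable factors, reduce via the Markov property to a conditional expectation given $J_n$, apply the preceding lemma's eigenvector identity $\mathbb{E}_{J_n}[e^{\theta(S_{n+1}-S_n)}v_{J_{n+1}}^{(\theta)}]=\lambda(\theta)v_{J_n}^{(\theta)}$, and simplify. The paper works with $M_n=L_n v_{J_0}^{(\theta)}$ rather than $L_n$ directly, but that is a cosmetic difference, and your added remarks on integrability and Perron--Frobenius positivity of $\mathbf{v}^{(\theta)}$ are sound though not present in the paper.
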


\begin{proof}
Let $M_n=L_nv_{J_{0}}^{(\theta)}.$ Then,
\begin{eqnarray*}
\mathbb{E}[M_{n+1}|\mathcal{F}_n]&=&\mathbb{E}[{\rm e}^{\theta S_{n+1}-(n+1)\ln \lambda(\theta)} v_{J_{n+1}}^{(\theta)}|\mathcal{F}_n]\\
&=&\mathbb{E}[{\rm e}^{\theta(S_{n+1}-S_n)}v^{(\theta)}_{J_{n+1}}|\mathcal{F}_n]{\rm e}^{\theta S_n-(n+1)\ln \lambda(\theta)}\\
&=&\mathbb{E}_{J_{n}}[{\rm e}^{\theta(S_{n+1}-S_n)}v_{J_{n+1}}^{(\theta)}]{\rm e}^{\theta S_n-(n+1)\ln \lambda(\theta)}\\
&=&\lambda(\theta)v_{J_n}^{(\theta)}{\rm e}^{\theta S_n-(n+1)\ln \lambda(\theta)}\\
&=&M_n,
\end{eqnarray*}
which gives the assertion of the lemma.
\end{proof}

Define now a new conditional probability measure $\mathbb{Q}^{(\theta)}_{i}(dx)=\mathbb{Q}^{(\theta)}(dx|J_0=i)$ using Randon-Nikodym derivative as follows:
$${\frac{d\mathbb{Q}^{(\theta)}_i}{d\mathbb{P}_i}}=L_n.$$

\begin{Lemma}
Under the new measure $\mathbb{Q}$ process $\{Z_n^{(\theta)}\}_{n\in\mathbb{N}}$ is again MAP specified by the Laplace transform of its kernel in the following way:
\begin{eqnarray}
\hat{{\bf F}}^{(\theta)}[\gamma]={\rm e}^{-\ln\lambda(\theta)}\left({\bf v}_{diag}^{(\theta)}\right)^{-1}\hat{\bf F}[\theta+\gamma]{\bf v}^{(\theta)}_{diag},
\end{eqnarray}
where ${\bf v}^{(\theta)}_{diag}$ is a diagonal matrix with ${\bf v}^{(\theta)}$ on the diagonal.
\end{Lemma}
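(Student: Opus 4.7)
The plan is to verify the identity by a direct computation of the kernel under $\mathbb{Q}^{(\theta)}$, and then argue separately that the resulting process retains the MAP structure. Most of the work is a one-step calculation using the explicit form of the Radon-Nikodym derivative from Lemma \ref{lem:martingale}.

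First I would compute the entry-wise transform. By definition of $\mathbb{Q}^{(\theta)}_i$ and the fact that $S_0=0$ so $\Delta S_1 = S_1$,
\begin{eqnarray*}
\hat F^{(\theta)}_{ij}[\gamma]
&=& \mathbb{E}^{\mathbb{Q}^{(\theta)}}_i\!\left[\mathrm{e}^{\gamma \Delta S_1};\, J_1=j\right]
\;=\;\mathbb{E}_i\!\left[L_1\, \mathrm{e}^{\gamma \Delta S_1};\, J_1=j\right]\\
&=& \mathbb{E}_i\!\left[\mathrm{e}^{\theta S_1-\ln\lambda(\theta)}\,\frac{v_{J_1}^{(\theta)}}{v_{J_0}^{(\theta)}}\,\mathrm{e}^{\gamma \Delta S_1};\,J_1=j\right]
\;=\;\frac{1}{\lambda(\theta)}\,\frac{v_j^{(\theta)}}{v_i^{(\theta)}}\,\hat F_{ij}[\theta+\gamma].
\end{eqnarray*}
Collecting these scalar identities into a matrix yields exactly
$\hat F^{(\theta)}[\gamma]=\mathrm{e}^{-\ln\lambda(\theta)}(\mathbf v^{(\theta)}_{\mathrm{diag}})^{-1}\hat F[\theta+\gamma]\mathbf v^{(\theta)}_{\mathrm{diag}}$,
which is the stated formula.

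Next I would check that $\hat F^{(\theta)}[\gamma]$ is a legitimate MAP kernel. Setting $\gamma=0$ and using the eigenvector relation $\hat F[\theta]\mathbf v^{(\theta)}=\lambda(\theta)\mathbf v^{(\theta)}$ gives $\hat F^{(\theta)}[0]\mathbf 1=(\mathbf v^{(\theta)}_{\mathrm{diag}})^{-1}\hat F[\theta]\mathbf v^{(\theta)}/\lambda(\theta)=\mathbf 1$, so the row sums of the probability kernel $\hat F^{(\theta)}[0]$ equal one. Non-negativity of the kernel is inherited from $\hat F$ and the positivity of the Perron eigenvector $\mathbf v^{(\theta)}$ (standing assumption on $\hat F[\theta]$).

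Finally I would justify that $\{Z_n^{(\theta)}\}$ is itself a MAP, not merely a process with the right one-step transform. Because $L_n$ has the multiplicative form $L_{n+1}/L_n=\mathrm{e}^{\theta\Delta S_{n+1}-\ln\lambda(\theta)}v_{J_{n+1}}^{(\theta)}/v_{J_n}^{(\theta)}$, which depends only on $(J_n,J_{n+1},\Delta S_{n+1})$, the conditional law under $\mathbb{Q}^{(\theta)}$ of $(J_{n+1},\Delta S_{n+1})$ given $\mathcal F_n$ depends on $\mathcal F_n$ only through $J_n$ and is time-homogeneous. This is exactly the defining property of a (discrete-time) MAP, and the one-step computation above pins down its kernel. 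The only mildly delicate point, and the step I would be most careful with, is this final verification that the exponential tilt by a martingale of product form preserves the Markov additive structure; once this is granted, the matrix identity for $\hat F^{(\theta)}[\gamma]$ is immediate from the entry-wise calculation.
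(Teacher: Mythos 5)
Your proof is correct and takes essentially the same approach as the paper: both compute the tilted one-step kernel entry-by-entry via the Radon--Nikodym derivative $L_1$, the paper working with the measure $F^{(\theta)}_{ij}(dx)={\rm e}^{\theta x-\ln\lambda(\theta)}\frac{v_j^{(\theta)}}{v_i^{(\theta)}}F_{ij}(dx)$ and then passing to the m.g.f., while you tilt the Laplace transform directly. The extra verifications you add at the end (row sums of $\hat F^{(\theta)}[0]$ equal one, and the product form of $L_{n+1}/L_n$ preserving the Markov-additive structure) are valid and merely make explicit what the paper leaves implicit in the phrase ``the new measure is exponentially proportional to the old one.''
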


\begin{proof}
Note that the kernel $F^{(\theta)}_{ij}(dx)$ of $Z_n$ can be written as:
\begin{eqnarray*}
F^{(\theta)}_{ij}(dx) &=& \mathbb{Q}^{(\theta)}_i(S_1\in dx, J_1=j) =\mathbb{E}^{\mathbb{Q}^{(\theta)}}[{\bf 1}_{\{S_1\in dx, J_1=j\}}|J_0=i]= \mathbb{E}_i[L_1{\bf 1}_{\{S_1\in dx, J_1 = j\}}]\\
&=&
{\rm e}^{\theta x-\ln \lambda(\theta)} \frac{v_{j}^{(\theta)}}{v_{i}^{(\theta)}}F_{ij}(dx).
\end{eqnarray*}
This shows that the new measure is exponentially proportional to the old one, which ensures that $F^{(\theta)}_{ij}$ is absolutely continuous with respect to $F_{ij}$. Further transferring it into the matrix m.g.f. form yields the desired result.
\end{proof}

\begin{Corollary}\label{newp}
Under the new measure $\mathbb{Q}^{(\theta)}$, the MAP $\{Z_n^{(\theta)}\}_{n\in\mathbb{N}}$ consists of a Markov state process $\{J^{(\theta)}_n\}_{n\in\mathbb{N}}$ which has a transition probability matrix
\begin{eqnarray}
{\bf P}^{(\theta)} = \left[\begin{array}{ccc}
q_{\theta} & p_{\theta}\\
\tilde{q}_{\theta} & \tilde{p}_{\theta} \end{array}\right],
\label{eqn:chgtransition}
\end{eqnarray}
where
\begin{eqnarray*}
\tilde p _{\theta} = \frac{\beta\lambda_2}{(\beta-\theta)(\lambda_2+\theta)} e^{-(\lambda_2+\theta)\xi},  &&\;\tilde q_{\theta} = 1-\tilde p_{\theta},\\
q_{\theta} = \frac{\beta\lambda_1}{(\beta-\theta)(\lambda_1+\theta)}(1- e^{-(\lambda_1+\theta)\xi}),   &&\; p_{\theta} =  1- q_{\theta},
\end{eqnarray*}
and an additive component $\{S^{(\theta)}_n\}_{n\in\mathbb{N}}$ with random variables $Y,\tau^{>\xi},\tau^{<\xi},\tilde{\tau}^{>\xi},\tilde{\tau}^{<\xi}$
with laws given in Theorem \ref{thm:chgmeasure} where $\theta$ should be chosen everywhere instead of $\kappa$.
\end{Corollary}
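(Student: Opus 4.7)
The plan is to apply the kernel identity from the preceding Lemma at $\gamma=0$ and then substitute the exponential specification to extract explicit formulas. Evaluating $\hat{\bf F}^{(\theta)}[\gamma] = \lambda(\theta)^{-1}({\bf v}^{(\theta)}_{diag})^{-1}\hat{\bf F}[\theta+\gamma]{\bf v}^{(\theta)}_{diag}$ at $\gamma=0$ gives the transition matrix of $\{J_n^{(\theta)}\}$ as ${\bf P}^{(\theta)} = \hat{\bf F}^{(\theta)}[0]$. Row-stochasticity of the resulting matrix is automatic from $\hat{\bf F}[\theta]{\bf v}^{(\theta)} = \lambda(\theta){\bf v}^{(\theta)}$, computed row-by-row.

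Next, I would compute $\hat{\bf F}[\theta]$ entry by entry using independence of $Y$ from each inter-arrival time. With $Y\sim{\rm Exp}(\beta)$, $\tau\sim{\rm Exp}(\lambda_1)$, $\tilde\tau\sim{\rm Exp}(\lambda_2)$ one obtains the diagonal entries
\[
\hat F_{11}[\theta] = \frac{\beta\lambda_1}{(\beta-\theta)(\lambda_1+\theta)}\bigl(1-{\rm e}^{-(\lambda_1+\theta)\xi}\bigr), \qquad
\hat F_{22}[\theta] = \frac{\beta\lambda_2}{(\beta-\theta)(\lambda_2+\theta)}{\rm e}^{-(\lambda_2+\theta)\xi},
\]
together with the two analogous off-diagonal entries obtained by swapping the indicator regions. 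Solving the resulting $2\times 2$ characteristic equation yields $\lambda(\theta)$ and ${\bf v}^{(\theta)}$; substituting these into the kernel identity and collecting terms reproduces exactly the claimed $q_\theta$, $p_\theta$, $\tilde q_\theta$, $\tilde p_\theta$.

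For the additive component, I would observe that each entry $F^{(\theta)}_{ij}(dx)$ factorises as the exponentially tilted joint law of $(Y,\tau)$ or $(Y,\tilde\tau)$ restricted to the appropriate region $\{\tau\leq\xi\}$ or $\{\tau>\xi\}$. Applying the tilt ${\rm e}^{\theta(y-t)}$ to independent exponential densities and renormalising within each restricted region reproduces precisely the shifted laws listed in Theorem \ref{thm:chgmeasure} with $\kappa$ replaced by $\theta$: namely $Y^{(\theta)}\sim{\rm Exp}(\beta-\theta)$, together with $\tau^{\leq\xi}$, $\tau^{>\xi}$ exponential with rate $\lambda_1+\theta$ on the respective subintervals and the analogous statement for $\tilde\tau^{\leq\xi}$, $\tilde\tau^{>\xi}$ with rate $\lambda_2+\theta$.

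The main obstacle I anticipate is matching the stated diagonal probabilities $q_\theta=\hat F_{11}[\theta]$ and $\tilde p_\theta=\hat F_{22}[\theta]$ directly: the general formula for ${\bf P}^{(\theta)}$ carries the extra factor $\lambda(\theta)^{-1}$ and the eigenvector ratios $v_j^{(\theta)}/v_i^{(\theta)}$, so the cancellation that yields these clean expressions must be tracked carefully. Concretely, the row-sum calculation $\sum_j (v_j^{(\theta)}/v_i^{(\theta)})\hat F_{ij}[\theta] = \lambda(\theta)$ is what makes the off-diagonals be $1$ minus the diagonal, while the diagonal equalities $P^{(\theta)}_{ii} = \hat F_{ii}[\theta]$ correspond to the natural analogue of the Cram\'er condition $\varphi(\kappa)=1$ from Theorem \ref{thm:chgmeasure}, i.e.\ to the regime where $\lambda(\theta)=1$ and the eigenvector coordinates cancel pairwise in the diagonal entries.
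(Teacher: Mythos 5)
Your route — evaluate the kernel identity of the preceding Lemma at $\gamma=0$, read off ${\bf P}^{(\theta)}=\hat{\bf F}^{(\theta)}[0]$, compute $\hat{\bf F}[\theta]$ entry by entry for the exponential specification, and identify the tilted laws of $Y$, $\tau^{\leq\xi}$, $\tau^{>\xi}$, $\tilde\tau^{\leq\xi}$, $\tilde\tau^{>\xi}$ — is exactly the derivation, and your computed $\hat F_{11}[\theta]$, $\hat F_{22}[\theta]$ are correct. The paper supplies no proof beyond the Lemma, so there is nothing to compare against except your own reasoning, and you have it right.

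The observation in your final paragraph is the substantive point and deserves to be stated affirmatively rather than flagged as an ``obstacle'': from the Lemma,
\[
P^{(\theta)}_{ij}=\lambda(\theta)^{-1}\,\frac{v_j^{(\theta)}}{v_i^{(\theta)}}\,\hat F_{ij}[\theta],
\]
so on the diagonal the eigenvector ratio cancels identically and $P^{(\theta)}_{ii}=\lambda(\theta)^{-1}\hat F_{ii}[\theta]$. The Corollary's printed expressions are $q_\theta=\hat F_{11}[\theta]$ and $\tilde p_\theta=\hat F_{22}[\theta]$, i.e.\ they omit the factor $\lambda(\theta)^{-1}$, and are literally correct only when $\lambda(\theta)=1$. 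That holds at $\theta=\kappa$ (the ensuing Remark confirms $\lambda(\kappa)=1$), which is the only value at which the Corollary is actually invoked in the ruin formula that follows, so the paper's downstream use is sound. But for generic $\theta$ the stated transition probabilities should carry the normalisation $\lambda(\theta)^{-1}$, and you are right that one cannot obtain the clean displayed formulas by ``substituting and collecting terms'' without invoking $\lambda(\theta)=1$. Your row-sum argument for stochasticity and the factorisation of the tilted kernel into the shifted exponential laws for the additive part are both correct.
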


In fact, when $\theta=\kappa$, $\mathbb{Q}^{(\theta)}$ coincides with $\mathbb{Q}$ defined by Theorem \ref{thm:chgmeasure}. Recall $T^{*}(x)$ from \eqref{eqn:mruintime} and $\psi(x)\geq \psi^{*}(x)$. Since $\sigma_{T^{(2)}(x)}\leq T^{*}(x)$, then $\mathbb{Q}(T^{*}(x)<\infty)=1$ implies $\mathbb{Q}^{(\kappa)}(T^{(2)}(x)<\infty)=1$.
Now the main representation used in simulations follows straightforward from above lemmas and Optional Stopping Theorem
as it was already done in the previous section and it is given in the next theorem.

\begin{Theorem}
The ruin probability for the underlying process \eqref{eqn:XM} equals:
\begin{equation}
\psi(x) = v^{(\kappa)}_{2} e^{-\kappa x} \mathbb{E}^{(\kappa)}_{2}\left[\frac{e^{-\kappa \varepsilon(T^{(2)}(x))}}{v^{(\kappa)}_{J_{T^{(2)}(x)}}}\right],
\label{eqn:mapruin}
\end{equation}
where $\varepsilon(T^{(2)}(x))=S^{(\kappa)}_{T^{(2)}(x))}-u$ denotes the overshoot at the time of ruin $T^{(2)}(x)$.
\end{Theorem}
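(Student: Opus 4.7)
The plan is to derive the representation by applying the exponential change of measure from Corollary \ref{newp} combined with an Optional Stopping argument, exactly in the spirit of the macro-ruin calculation that was carried out in the previous subsection but now lifted to the bivariate MAP $\{Z_n\}=\{(S_n,J_n)\}$. The first step is to specialize the martingale of Lemma \ref{lem:martingale} to the particular value $\theta=\kappa$. Here $\kappa$ is chosen as the Cramér root, i.e. the positive value for which the Perron eigenvalue of $\hat{\bf F}[\theta]$ satisfies $\lambda(\kappa)=1$; that this $\kappa$ coincides with the root of $\varphi(\kappa)=1$ used in Theorem \ref{thm:chgmeasure} follows by evaluating $\varphi$ in the representation \eqref{phitheta} against the stationary distribution of ${\bf P}$. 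With $\lambda(\kappa)=1$ the martingale collapses to
\[
L_n \;=\; e^{\kappa S_n}\,\frac{v^{(\kappa)}_{J_n}}{v^{(\kappa)}_{J_0}}.
\]

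Next I would invoke the Radon-Nikodym identity ${\mathrm d}\mathbb{Q}^{(\kappa)}_i/{\mathrm d}\mathbb{P}_i|_{\mathcal{F}_n}=L_n$. For any event $G\in\mathcal{F}_n$ contained in $\{T^{(2)}(x)\le n\}$, this yields $\mathbb{P}_i(G)=\mathbb{E}^{(\kappa)}_i[L_n^{-1};G]$, and because $L_n$ is a positive martingale this identity extends by Optional Stopping to the stopping time $T^{(2)}(x)\wedge n$. The essential ingredient is the remark after Corollary \ref{newp}: under $\mathbb{Q}^{(\kappa)}$ the MAP has positive drift, so $T^{(2)}(x)<\infty$ $\mathbb{Q}^{(\kappa)}_2$-almost surely, which lets us remove the truncation $n\to\infty$ by monotone/dominated convergence applied to $L_{T^{(2)}(x)\wedge n}^{-1}\mathbf{1}_{\{T^{(2)}(x)\le n\}}$, noting that the integrand is bounded by $\bigl(\max_i v^{(\kappa)}_i/\min_i v^{(\kappa)}_i\bigr)e^{-\kappa x}$ since $S_{T^{(2)}(x)}\ge x$.

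After passing to the limit, setting $J_0=2$ (since ruin in the original process corresponds to starting the MAP in state $2$, $\psi(x)=\mathbb{P}_2(T^{(2)}(x)<\infty)$), and using that $S_{T^{(2)}(x)}=x+\varepsilon(T^{(2)}(x))$, I obtain
\[
\psi(x)\;=\;\mathbb{E}^{(\kappa)}_2\!\left[L_{T^{(2)}(x)}^{-1}\right]
\;=\;\mathbb{E}^{(\kappa)}_2\!\left[\frac{v^{(\kappa)}_{2}}{v^{(\kappa)}_{J_{T^{(2)}(x)}}}\,e^{-\kappa(x+\varepsilon(T^{(2)}(x)))}\right],
\]
and pulling the deterministic factors $v^{(\kappa)}_{2}e^{-\kappa x}$ out of the expectation yields exactly \eqref{eqn:mapruin}.

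The main obstacle I expect is the justification of the passage to the limit $n\to\infty$, i.e. confirming that the Optional Stopping Theorem may be applied at the unbounded stopping time $T^{(2)}(x)$. The delicate point is not finiteness of the expectation but the change-of-measure step itself: one must check that the events $\{T^{(2)}(x)\le n\}\uparrow\{T^{(2)}(x)<\infty\}$ under both measures and that the integrand $L_{T^{(2)}(x)\wedge n}^{-1}\mathbf{1}_{\{T^{(2)}(x)\le n\}}$ is uniformly dominated. The boundedness of the eigenvector coordinates $v^{(\kappa)}_i$ (since $E=\{1,2\}$ is finite) and the lower bound $S_{T^{(2)}(x)}\ge x$ at ruin together deliver a uniform bound, so the convergence is in fact controlled, but this verification is the one genuine technical point that has to be spelled out carefully before the computation above goes through.
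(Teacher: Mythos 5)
Your proposal follows exactly the route the paper has in mind: it states that the representation ``follows straightforward[ly] from above lemmas and Optional Stopping Theorem as it was already done in the previous section,'' and what you have written out is precisely that argument made explicit --- specialize the martingale of Lemma \ref{lem:martingale} to $\theta=\kappa$ with $\lambda(\kappa)=1$, invoke the Radon--Nikodym identity ${\mathrm d}\mathbb{Q}^{(\kappa)}_i/{\mathrm d}\mathbb{P}_i|_{\mathcal F_n}=L_n$, apply Optional Stopping at $T^{(2)}(x)\wedge n$, remove the truncation using the positive drift under $\mathbb{Q}^{(\kappa)}$ and the uniform bound coming from $S_{T^{(2)}(x)}\ge x$ together with the finitely many strictly positive entries of ${\bf v}^{(\kappa)}$, and substitute $S_{T^{(2)}(x)}=x+\varepsilon(T^{(2)}(x))$. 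Your justification of the limit passage is exactly the ``one genuine technical point'' the paper leaves implicit, and it is handled correctly.

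One small inaccuracy worth flagging: you assert that the identification of the Perron root $\kappa$ of $\hat{\bf F}[\theta]$ with the Cram\'er root of $\varphi$ ``follows by evaluating $\varphi$ against the stationary distribution of ${\bf P}$.'' That is not quite the mechanism. The paper establishes the identification (in the Remark immediately after the theorem) by writing down the characteristic equation $\det(\hat{\bf F}[\kappa]-\lambda I)=0$ and observing that $\lambda=1$ is a root precisely when $\varphi(\kappa)=1$ as in \eqref{eqn:mgfx1}; the link comes through the eigenvector equation $\hat{\bf F}[\kappa]{\bf v}={\bf v}$, not through a stationary-distribution average. This is a side remark in your write-up and does not affect the validity of the main derivation, but as stated it is misleading and should be replaced by the direct substitution argument.
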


Now we will simulate ruin events using new parameters of the model identified in Lemma \ref{newp}. We start from state $2$ of $J_0$.
We will run our risk process until ruin event. With each ruin event we will associate its
weight $v^{(\kappa)}_{2} e^{-\kappa x} \frac{e^{-\kappa \varepsilon(T^{(2)}(x))}}{v^{(\kappa)}_{J_{T^{(2)}(x)}}}$.
Summing and averaging all weights gives the estimate of the ruin probability $\psi(x)$.

\begin{Remark}
In addition, it has been discovered that
$\hat{\bf{F}}[\kappa]$ has an eigenvalue equal to 1 and
\begin{eqnarray*}
{\bf v}^{(\kappa)} = \left[\begin{array}{ccc}
\frac{\beta\lambda_1}{(\beta-\kappa)(\lambda_1+\kappa)}-q_{\kappa}\\
p_{\kappa}\end{array}\right]
\end{eqnarray*}
is the corresponding right eigenvector.
\end{Remark}
\begin{proof}
Indeed, let $\lambda$ denote the eigenvalue of $\hat{\bf{F}}[\kappa]$. Thus we can write,
\begin{equation*}
(\mathbb{E}[{\rm e}^{\kappa Y}]\mathbb{E}[{\rm e}^{-\kappa \tau},\tau\leq \xi]-\lambda)(\mathbb{E}[{\rm e}^{\kappa Y}]\mathbb{E}[{\rm e}^{-\kappa \tilde{\tau}} \tilde{\tau}>\xi]-\lambda)=(\mathbb{E}[{\rm e}^{\kappa Y}])^2\mathbb{E}[{\rm e}^{-\kappa \tau},\tau>\xi]\mathbb{E}[{\rm e}^{-\kappa \tilde{\tau}},\tilde{\tau}\leq\xi].
\end{equation*}
Recall \eqref{eqn:mgfx1}, clearly $\lambda=1$ is a solution to the above equation. That directly leads to $\hat{\bf{F}} {\bf v}={\bf v}$ and one can obtain
$$\frac{v_1}{v_2}=\frac{\mathbb{E}[{\rm e}^{-\kappa \tau},\tau>\xi]}{1-\mathbb{E}[{\rm e}^{-\kappa \tau},\tau\leq\xi]}.$$ Plugging in the parameters completes the proof.
\end{proof}

\begin{Example}\rm
Assume that $Y$, $\tau$ and $\tilde\tau$ have exponential distribution with parameters $\beta =3$, $\lambda_1 = 1$ and $\lambda_2 = 2$, respectively.
The smallest positive real root of Equation \eqref{eqn:mgfx1}  is calculated for $\kappa = 1.1439$ and its corresponding right eigenvector is $\bf{v}^{(k)} = [0.5790,  0.8153]'$.
Then the ruin probability is plotted in Figure 8. It shows an exponential decay as we expected.
\begin{figure}[!htb]
\centering
\scalebox{0.25}[0.25]{\includegraphics{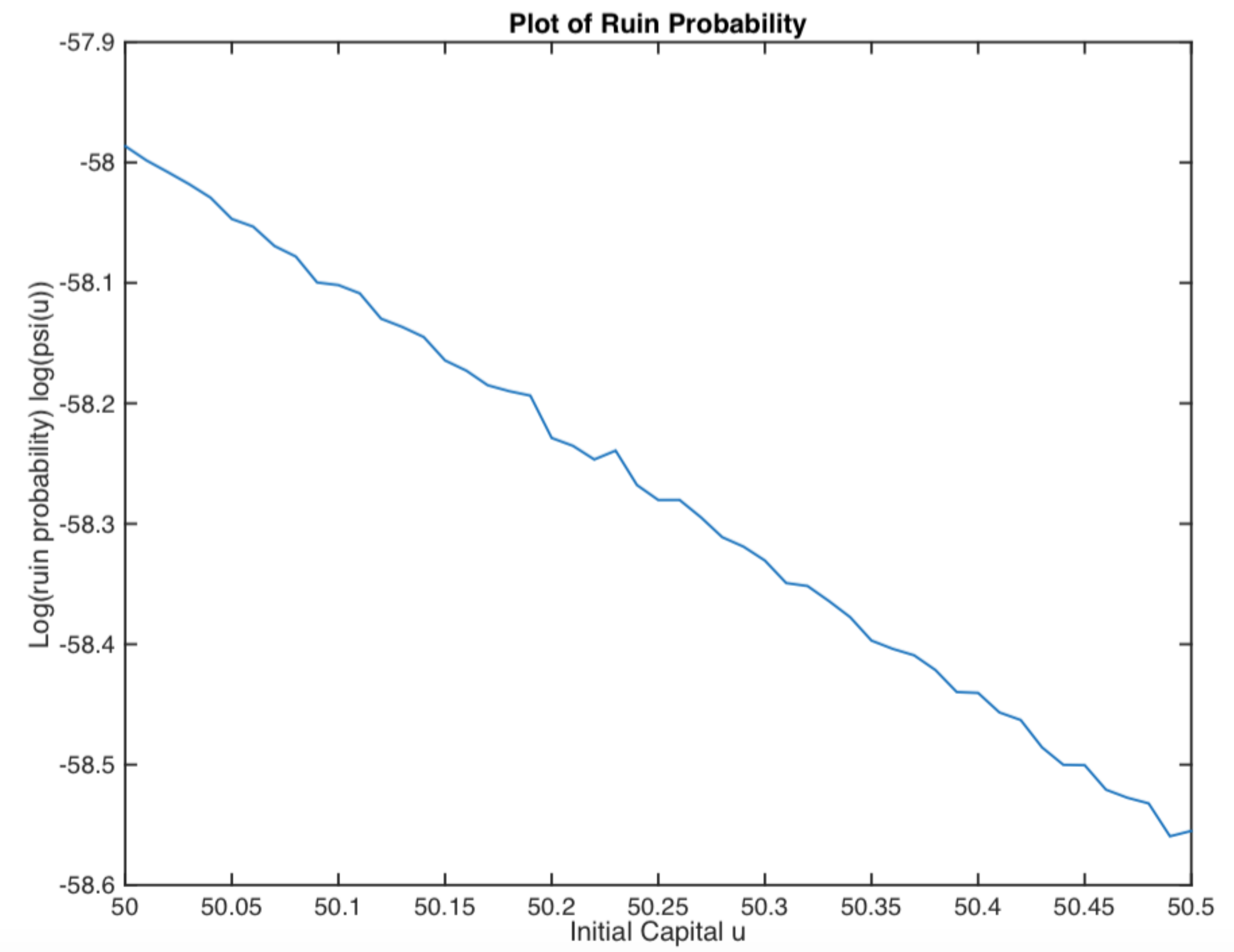}}
\caption{Logarithm of ruin probability}
\label{Fig:rp}
\end{figure}
\end{Example}

\subsection{Case Study}
In this subsection, we show some results via a crude Monte Carlo simulation method. The key idea is to simulate the process according to the model setting and simply counting the number of paths that gets to ruin. Due to the nature of this approach, a 'maximum' time should be set beforehand, which means we are in fact simulating a finite time ruin probability. However, the drawback of it may be ignored for now as long as we are not getting a lot of zeros.\\

Our task is to compare the simulated results with a classical analytical ruin function when exponential claims are considered.
\begin{equation*}
\psi_C(u) = \frac{\lambda}{\beta}{\rm e}^{-(\beta-\lambda)u}.
\end{equation*}
Just to prepare for later explanation, a system of integral equations for our model could actually be written,
\begin{eqnarray}
\psi_1(x)&=&\int^{\xi}_0 f_1(t)g_1(x+t)dt+\int^{\infty}_\xi f_1(t)g_2(x+t)dt,\label{IE1}\\
\psi_2(x)&=&\int^{\xi}_0 f_2(t)g_1(x+t)dt+\int^{\infty}_\xi f_2(t)g_2(x+t)dt,\label{IE2}
\end{eqnarray}
where $\psi_1(x), \psi_2(x)$ correspond to the ruin probabilities with the first inter-arrival time being $\tau$ and $\tilde{\tau}$ respectively, and
\begin{equation*}
g_i(x)=\int^{x}_0 \psi_i(x-y)b(y)dy+\int^{\infty}_x b(y)dy,\quad i=1,2
\end{equation*}
with $b(y)$ being the density function of the claim sizes.
Hence, for the simplest case of exponentially distributed claim costs, we plotted both the classic ruin probabilities and our simulated ones on the same graph as shown below (see Figure \ref{Fig:classic}).\\
\begin{figure}[!htb]
\centering
\scalebox{0.45}[0.45]{\includegraphics{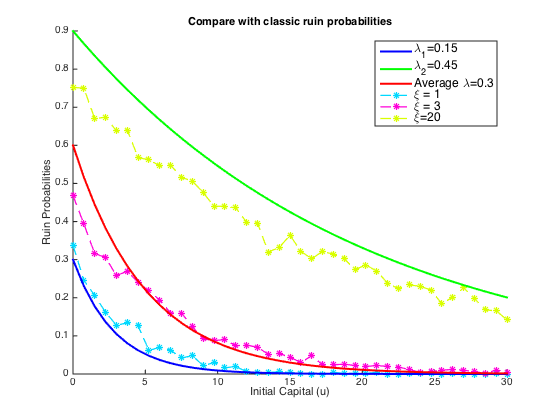}}
\caption{Comparison with classic ruin probabilities with Exponential claims}
\label{Fig:classic}
\end{figure}

It could be concluded that under two given parameters for Poisson intensity, simulated finite ruin probabilities in our model lie between two extreme but have many possibilities in-between. The comparison depends extensively on the value of $\xi$. These results also confirmed Theorem \ref{thm:cramer} that the tail of the ruin function in our case still has an exponential decay and $\xi$ is strongly related to the solution for $\kappa$. In other words, when the dependence is introduced, it is not for sure that ruin probabilities would see an improvement. \\ 

Moving into details, solid lines show classical ruin probabilities (infinite-time) as a function of initial reserve $u$, and each of them denotes an individual choice of Poisson parameters ($\lambda_1=0.15$, $\lambda_2=0.45$) with the middle one being the average of the other two ($\lambda=0.3$). It is clear that the larger the Poisson parameter, the higher is the ruin probability. On the other hand, those dotted lines are simulated results from our risk model with dependence for the same given pair of Poisson parameters $\lambda_1=0.15$ and $\lambda_2=0.45$. The four layers here correspond to four different choices of values for $\xi$, i.e., $\xi=1,\xi=3,\xi=4.44,\xi=20$. If $\xi\rightarrow 0$, the simulated ruin probability (in fact finite-time) tends to a classical case with the lower claim arrival intensities ($\lambda_1$ here), which explains the blue dotted line lying around the dark blue solid line. On the contrary, if $\xi\rightarrow \infty$, simulated ruin probabilities approach the other end. This phenomenon is also theoretically supported by the integral equations \eqref{IE1} and \eqref{IE2} if either of these limits ($\xi\rightarrow 0$ and $\xi\rightarrow\infty$) is taken. This then triggered us to search for a $\xi$ such that the simulated ruin probability coincides with a classical one. Let us see an example here, if $\xi=\frac{\frac{1}{\lambda_1}+\frac{1}{\lambda_2}}{2} = 4.44$ based on the parameters we chose in Figure \ref{Fig:classic}. That implies the choice of our fixed window is the average length of the two kinds of inter-arrival times. However, as can be seen from Figure \ref{Fig:classic}, the dotted line with $\xi=3$ lies closer than the one with $\xi=4.44$ to the red solid line. This suggests that the choice of $\xi$ will influence the simulated ruin probabilities and thus the comparison with a classical one. It is also very likely that there exists a $\xi$ such that our simulated ruin probabilities concur with the classic one.\\

While the first half of the Monte Carlo simulation looked at the influence of $\xi$ on simulated ruin probabilities, the second step is to see the effects of claim sizes. Typical representation of light-tailed and heavy-tailed distributions - Exponential and Pareto - were assumed for claim severities and inter arrival times were switching between two different exponentially distributed random variables with parameters $\lambda_1$ and $\lambda_2$. Two cases were simulated - either $\lambda_1>\lambda_2$ or $\lambda_1<\lambda_2$. It is expected that the effects from claim severity distributions on infinite time ruin probabilities would be tiny as they normally affects more severely in the deficit at ruin. Here, since we simulate finite-time ruin probabilities, we are curious whether the same conclusion can be drawn.\\

\begin{figure}[!htb]
\centering
\begin{subfigure}[b]{7cm}
\scalebox{0.35}[0.35]{\includegraphics{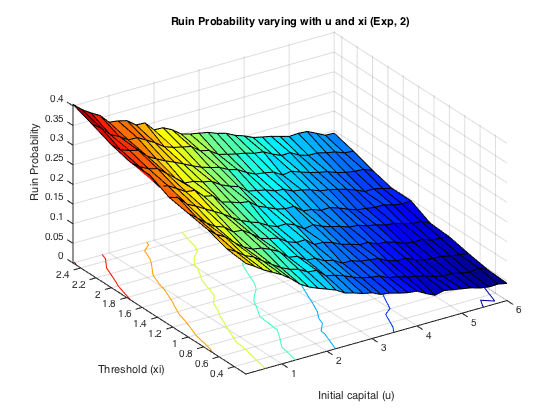}}
\caption{Ruin probabilities when $\lambda_1=0.45, \lambda_2=0.15, \beta = 0.5$}
\label{Fig:rpexp_higher1}
\end{subfigure}

\hspace{1cm}

\begin{subfigure}[b]{7cm}
\centering
\scalebox{0.35}[0.35]{\includegraphics{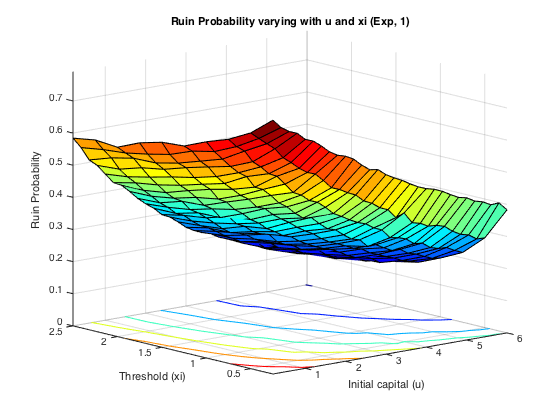}}
\caption{Ruin probabilities when $\lambda_1=0.15, \lambda_2=0.45, \beta = 0.5$}
\label{Fig:rpexp_higher2}
\end{subfigure}
\caption{Examples: Ruin probabilities for Exponential Claims}\label{Fig:rpexp}
\end{figure}

\begin{figure}[!htb]
\centering
\begin{subfigure}[b]{7cm}
\scalebox{0.35}[0.35]{\includegraphics{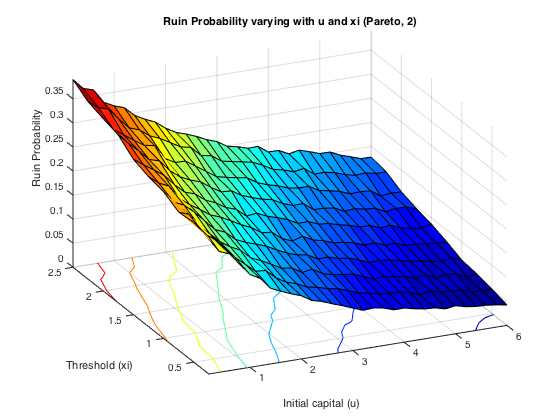}}
\caption{Ruin probabilities when $\lambda_1 = 0.45, \lambda_2 = 0.15, \alpha = 2$}
\label{Fig:rppar_higher1}
\end{subfigure}

\hspace{1cm}

\begin{subfigure}[b]{7cm}
\centering
\scalebox{0.35}[0.35]{\includegraphics{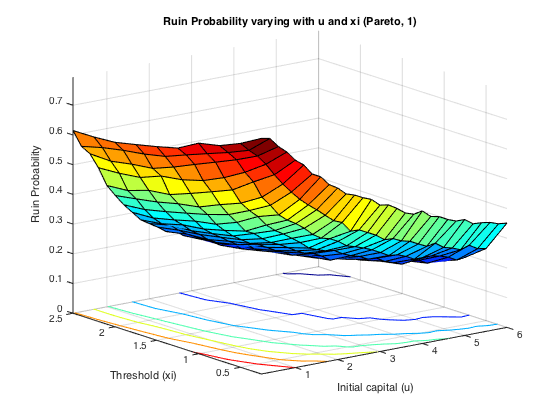}}
\caption{Ruin probabilities when $\lambda_1 = 0.15, \lambda_2 = 0.45, \alpha=2$}
\label{Fig:rppar_higher2}
\end{subfigure}
\caption{Examples: Ruin probabilities for Pareto claims}\label{Fig:rppar}
\end{figure}

\begin{figure}[!htb]
\centering
\scalebox{0.35}[0.35]{\includegraphics{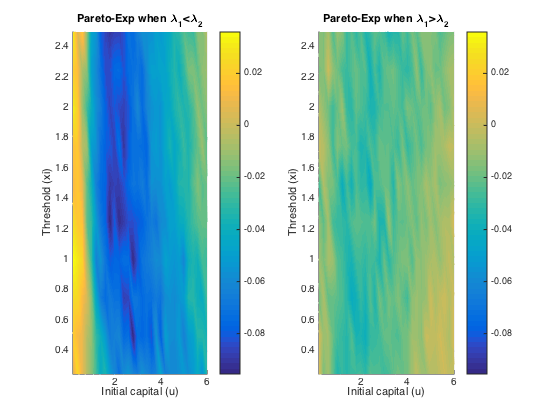}}
\caption{Differences in ruin probabilities using two claim distributions}
\label{Fig:dif}
\end{figure}

Figure \ref{Fig:rpexp} displays the two cases for Exponential claims while Figure \ref{Fig:rppar} does that for Pareto claims. All of these four graphs demonstrate a decreasing trend for simulated finite-time ruin probabilities over the amount of initial surplus, which is as expected. In general, the differences between ruin probabilities for Exponentially distributed claim costs and those for Pareto ones are not significant. To be more precise, the exact values of these disparities are plotted in Figure \ref{Fig:dif}. The color bar shows the scale of the graph, and yellow represents values around 0. Indeed, the differences are very small. Furthermore, it can be seen that the disparities behave differently when $\lambda_1<\lambda_2$ and when $\lambda_1>\lambda_2$. For the former case, ruin probabilities for Pareto claims tend to be smaller than those for Exponential claims when the initial reserve is not little, whereas there seems to be no distinction between the two claim distributions in the latter case. One way to explain this is that claim distributions would have more impact on the deficit at ruin because the claim frequency is not affected, the same as in an infinite-time ruin case. However, this is just a sample simulated result from which we cannot draw a general conclusion.\\

On the other hand, it could be seen from the projections on the $y-z$ plane that the magnitude of $\lambda_1$ and $\lambda_2$ causes different monotonicity of ruin probabilities with respect to the fixed window $\xi$. If $\lambda_1>\lambda_2$, the probability of ruin is monotonically increasing with the increase of $\xi$. If $\lambda_1<\lambda_2$, it appears to be the opposite monotonicity. This conclusion for monotonicity is true for both models with heavy-tailed claims and those with light-tailed ones. Such behaviour could also be theoretically verified if we look at the stationary distribution of the Markov Chain created by the exchange of inter claim times given by \eqref{pi1} and \eqref{pi2}. The increase of $\xi$ will raise the probability of getting an inter-claim time smaller than $\xi$ at steady state, i.e., $$\xi \uparrow\;\;\Rightarrow\;\;\pi_1\uparrow, \; \pi_2\downarrow.$$
Then that directly leads to an increasing number of $\tau$. The ruin probability is associated with $$S_T=\sum^{N_1(T)+N_2(T)}_{k=1} Y_k-\sum^{N_1(T)}_{i=1}\tau-\sum^{N_2(T)}_{j=1}\tilde{\tau}$$ for any fixed time $T$, where $N_1(T)$ and $N_2(T)$ denote the number of times $\tau$ and $\tilde{\tau}$ appearing in the process. Notice that $\sum^{N_1(T)}_{i=1}\tau+\sum^{N_2(T)}_{j=1}\tilde{\tau}=T$ stays the same even though the value of $\xi$ alters. So now the magnitude of $S_T$ depends only on $N_1(T)+N_2(T)$ and the distribution of i.i.d $Y_k$. The change of $\xi$ alters only the former value. Intuitively, a rise in $\pi_1$ indicates an increase in $N_1(T)$ and a decrease in $N_2(T)$ whose amount is denoted by $\Delta N_1$ and $\Delta N_2$, respectively. Since the sum of $\tau$s and $\tilde{\tau}$s is kept constant, we have
\begin{eqnarray*}
|\Delta N_1|\mathbb{E}[\tau]&=&|\Delta N_2|\mathbb{E}[\tilde{\tau}]\\
\left|\frac{\Delta N_1}{\Delta N_2}\right|& =& \frac{\mathbb{E}[\tilde{\tau}]}{\mathbb{E}[\tau]}
\end{eqnarray*}
If $\lambda_1>\lambda_2$,  then $\mathbb{E}[\tau]<\mathbb{E}[\tilde{\tau}]$, which implies $\left|\frac{\Delta N_1}{\Delta N_2}\right|>1$. That is to say, the increase of $N_1(T)$ is more than the drop in $N_2(T)$ so that $N_1(T)+N_2(T)$ sees a rise in the end. Thus, it leads to a higher ruin probability. On the contrary, when $\lambda_1<\lambda_2$, i.e., $\mathbb{E}[\tau]>\mathbb{E}[\tilde{\tau}]$, as $\xi$ goes up, ruin probabilities would experience a monotone decay. This reasoning is visually reflected in Figure \ref{Fig:rpexp}-\ref{Fig:rppar} shown above and it could also be noticed that the distribution of claims does not affect such monotonicity.\\

Therefore, by observation, these results suggest that when $\lambda_1<\lambda_2$, the larger choice of the fixed window $\xi$, the smaller the ruin probability will be, and vice versa. On the contrary, when $\lambda_1>\lambda_2$, the larger choice of the fixed window $\xi$, the larger the ruin probability will be, and vice versa. In fact $\lambda_1<\lambda_2$ was mentioned in the introduction (Figure \ref{fig:BMS}) to be an assumption for a Bonus system. Such observation suggests that if the insurer opts to investigate claims histories less frequently, i.e., choosing a larger $\xi$, the ruin probability tends to be smaller. This potentially implies a smaller ruin probability if no premium discount is offered to policyholders. It seems that to minimise an insurer's probability of ruin probably relies more on premium incomes. The use of Bonus systems may not help in decreasing such probabilities. The case of $\lambda_1>\lambda_2$ could be referred to as a Malus system which is unusual in the real world which leads to an opposite conclusion to the other case. This again addresses the significance of premium income to an insurer. In a system with purely maluses, the ruin probability could be reduced if the insurer reviews the policyholders' behaviours more frequently indicating more premium incomes.

\section{Conclusion}
In this paper, we found that a simple Bonus system could be reflected by a dependence structure embedded in a risk model. For the simplest case, we made inter-arrival times switch between two random variables by comparing them with a fixed window $\xi$. Such interchange was equivalently converted from the change of premium rates based on recent claims as shown by Figure \ref{fig:BMS} emulating a basic no claim discount (NCD) system where there are only two classes - either a base or discounted level. Theoretically speaking, it also works for a merely Malus system. Yet in practice, such system does not exist as it probably sounds more tempting if an insurance company offers rewards rather than a penalty.\\

Several different approaches have been undertaken to study the ruin probability under the framework of a regenerative process. It is not surprising under the Cram\'{e}r assumption, the ruin function still has an exponential tail. Since asymptotic results are not exact, we conducted numerical analyses based on different approaches. As a main contribution, we explained how we could construct a discrete Markov additive process from the model under concern when everything is exponentially distributed. By a change of measure via exponential families, ruin probabilities were possible to be simulated through a better presented form \eqref{eqn:mapruin}. Furthermore, we attached a case study using Monte Carlo simulations. It has been discovered that the underlying probability has opposite monotonicity with respect to the fixed time window $\xi$ when two random variables for the inter claim times swap parameters. Additionally, it implies that the use of Bonus systems may not be helpful in reducing ruin probabilities as the premium incomes seem to be more important. However, Bonus systems could still be used as a means of attracting market share which is beneficial to the business in many ways.

\vspace{6pt}


\acknowledgments{This work is under the support from the RARE-318984 project, a Marie Curie IRSES Fellowship within the 7th European Community Framework Programme. Z. Palmowski kindly acknowledges the support from the National Science Centre under the grant 2015/17/B/ST1/01102.}

\authorcontributions{Z.P. and S.D. conceived and designed the model; W.N. connected the work with the Bonus-Malus model; Z.P., W.N. and C.C. conducted analyses in details;  S.D. introduced simulation methods;  C.C. and W.N. interpreted the idea and wrote the paper. All authors substantially contributed to the work.}

\conflictofinterests{The authors declare no conflict of interest. The founding sponsors had no role in the design of the study; in the collection, analyses, or interpretation of data; in the writing of the manuscript, and in the decision to publish the results.}

%


\bibliographystyle{mdpi}

\renewcommand\bibname{References}


\begin{thebibliography}{999}
\bibitem{Alfredo1}
Afonso, L.B., Eg\'{i}dio dos Reis, A.D. and Waters, H.R., Calculating
continuous time ruin probabilities for a large portfolio with varying premiums.
{\it ASTIN Bulletin}, {\bf 2009}, {\it 39(01)}, 117--136.

\bibitem{Alfredo2}
Afonso, L.B., Rui, C., Eg\'{i}dio dos Reis, A.D., and Guerreiro, G.,
Bonus malus systems and finite and continuous time ruin probabilities
in motor insurance. {\bf 2015}, {\it Preprint}.

\bibitem{albrecher2004ruin}
Albrecher, H. and Boxma, O.J., A ruin model with dependence between claim sizes and claim intervals. {\it Insurance: Mathematics and Economics}, {\bf 2004}, {\it 35(2)}, 245--254.

\bibitem{albrecher2011explicit}
Albrecher, H., Constantinescu, C. and Loisel, S., Explicit ruin formulas for models with dependence among risks. {\it Insurance: Mathematics and Economics}, {\bf 2011}, {\it 48(2)}, 265--270.

\bibitem{asmussen2014}
Asmussen, S., Modeling and Performance of Bonus-Malus Systems: Stationarity versus Age-Correction. {\it Risks}, {\bf 2014}, {\it 2(1)}, 49--73.

\bibitem{RP}
Asmussen, S. and Albrecher, H., {\it Ruin Probabilities}, 2010, Vol. 14. World Scientific.

\bibitem{Jose}
Blanchet, J. and Glynn, P.,  Efficient rare-event simulation for the maximum of heavy-tailed random walks, {\it Ann. Appl. Probab.} {\bf 2008}, {\it 18(4)}, 1351--1378.

\bibitem{Dubey}
Dubey, A., Probabilit{\'{e}} de ruine lorsque le param{\`{e}}tre de Poisson est ajust{\'{e}} a posteriori, {\it Mitteilungen der Vereinigung schweiz Versicherungsmathematiker}, {\bf 1977}, {\it 2}, 130--141.

\bibitem{embrechts1997}
Embrechts, P. and Kl\:{u}ppelberg, C. and Mikosch, T.,  {\it Modelling extremal events for insurance and finance,} Berlin, 1997, Springer, c1997.



\bibitem{FKZ}
 Foss, S., Korshunov, D. and Zachary S., {\it An Introduction to Heavy-tailed and Subexponential Distributions,} 2011, Springer.

\bibitem{goldie}
Goldie, C.M., Implicit renewal theory and tails of solutions of random equations. {\it The Annals of Applied Probability}, {\bf 1991}, 126--166.

\bibitem{kwan2010dependent}
Kwan, I.K. and Yang, H., Dependent insurance risk model: deterministic threshold. {\it Communications in Statistics—Theory and Methods}, {\bf 2010}, {\it 39(5)}, 765--776.

\bibitem{Lemaire}
Lemaire, J., {\it Bonus-malus systems in automobile insurance}. 2012, Springer science \& business media.

\bibitem{Li2015}
Li, B. and Ni, W. and Constantinescu, C., Risk models with premiums adjusted to claims number. {\it Insurance: Mathematics and Economics}, {\bf 2015}, {\it 65(2015)}, 94--102.


\bibitem{ni2014}
Ni, W., Constantinescu, C. and Pantelous, A.A.,  Bonus–Malus systems with Weibull distributed claim severities. {\it Annals of Actuarial Science}, {\bf 2014}, {\it 8(02)}, 217--233.

\bibitem{BP1}
Palmowski, Z. and Zwart, B., Tail asymptotics for the supremum of a regenerative process,
{\it J. Appl. Probab.} {\bf 2007}, {\it 44(2)}, 349--365.

\bibitem{BP2}
Palmowski, Z. and Zwart, B., On perturbed random walks,
{\it J. Appl. Probab.} {\bf 2010}, {\it 47(4)}, 1203--1204.

\bibitem{Rolski} Rolski, T., Schmidli, H., Schmidt, V. and Teugles,
J.L., {\it Stochastic processes for insurance and finance.}
John Wiley and Sons, Inc., New York, 1999.

\bibitem{valdez2002ruin}
Valdez, E.A. and Mo, K., Ruin probabilities with dependent claims. {\it Working paper}, UNSW, Sydney, Australia, 2002.
\end{thebibliography}


\end{document}